\newif\ifger
\newtheorem{theorem}{Theorem}[section]
\newtheorem{lemma}[theorem]{Lemma}
\newtheorem{corollary}[theorem]{Corollary}
\newtheorem{definition}[theorem]{Definition}
\newtheorem{proposition}[theorem]{Proposition}
\def\S{\mathcal{S}}
\def\P{\mathcal{P}}
\def\L{\mathcal{L}}
\def\C{{\mathfrak C}}
\def\D{\mathcal{D}}\def\G1{G^{\C}}
\newcommand{\lcm}{\mathrm{lcm}}
\newcommand{\cC}{{\mathfrak C}}
\newcommand{\cS}{{\cal S}}
\def\al{\alpha}
\def\be{\beta}
\def\ga{\gamma}
\def\de{\delta}
\def\lam{\lambda}
\def\PG{{\rm PG}}
\def\Soc{{\rm Soc}}
\def\Aut{{\rm Aut}}
\newcommand{\Fix}{{\rm Fix}}
\newcommand{\PSL}{{\rm PSL}}
\newcommand{\PGL}{{\rm PGL}}
\newcommand{\GL}{{\rm GL}}
\newcommand{\PSU}{{\rm PSU}}
\def\al{\alpha}
\def\al{\alpha}
\def\be{\beta}
\def\Fix{{\rm Fix}}
\def\PSL{{\rm PSL}}
\def\PGL{{\rm PGL}}
\def\Soc{{\rm Soc}}
\begin{document}

\title{Line-transitive
point-imprimitive linear spaces with Fang-Li parameter $\gcd(k,r)$
at most 10}

\author{Haiyan Guan, Delu Tian and Shenglin Zhou\footnote{Partially supported by the NSFC(No:11071081). Corresponding author:
slzhou@scut.edu.cn }\\
{\small\it  Department of Mathematics, South China University of Technology,}\\
 {\small\it Guangzhou, Guangdong 510640, P. R. China}
 }
\maketitle
\date{}

\begin{abstract} This paper is a further contribution to the classification of line-transitive
finite linear spaces. We prove that if $\S$ is a non-trivial finite linear space with the Fang-Li parameter $\gcd(k,r)$ is 9 or 10, the group $G\leq \Aut(\S)$ is line-transitive and point-imprimitive, then $\S$ is the Desarguesian projective plane
$\PG(2,9)$.

\smallskip\noindent
{\bf Keywords}: linear space, line-transitive, point-imprimitive

\medskip
\noindent{\bf MR(2000) Subject Classification} 05B05, 05B25, 20B25

\end{abstract}

\maketitle

\section{Introduction}

A finite linear space $\S$  is an incident structure $(\P,\L)$
 where $\P$ is a set of $v$ points and $\L$ is a set of $b$ distinguished subsets of $\P$ called lines, such that any two points are
incident with exactly one line. A linear space $\S$ is  said to be non-trivial if
 every line is incident with at least 3 points and $b \geq 2$.

 An automorphism of $\S$ is a  permutation  of $\P$ which leaves the set $\L$ invariant. The full automorphism group of $\S$ is denoted by $\Aut(\S)$. Any subgroup of $\Aut(\S)$ is called an automorphism group of $\S$.
  If a subgroup $G$ of $\Aut(\S)$ acts transitively on the set of points (lines, flags), then we say that $G$ is line-transitive (point-transitive, flag-transitive), where a flag is an incident point-line pair $(\al, \lam)$.
 Similarly, the automorphism group $G$ is said to be point-primitive (line-primitive) if it acts primitively on
 points (lines). If the sizes of all lines are
equal, then we say that $\S$ is a regular linear space.
 In this paper, we assume that the automorphism group $G \leq \Aut(\S)$ is
 line-transitive. It follows that $\S$ is regular, and the line size will be denoted by $k$. It is well-known that if
 $G $ is line-transitive then it is also point-transitive(\cite{Block1967}), and for a non-trivial regular linear space every point lies on the same number $r$ of lines.

In \cite{BDLPZ2}, the line-transitive point-imprimitive linear spaces with the Fang-Li parameter
$\gcd(k,r)\leq 8$ are classified. As an application, C. E. Praeger and S. L. Zhou \cite{PZ2007} give the
 classification of the line-transitive linear spaces with $k\leq 12$, extending the
Camina-Mischke's one in \cite{CaminaMischke96}.

This paper is a continuation of  \cite{BDLPZ2}. We extended the
classification of line-transitive poin-imprimitive linear spaces
with the Fang-Li parameter $\gcd(k,r)$ up to 10, and proved the following result.

\begin{theorem}
  Let $\S$ be a non-trivial finite linear space with $v$ points
and Fang-Li parameter $k^{(r)}=9$ or $10$.  Assume that $G\leq \Aut(\S)$ is line-transitive and
point-imprimitive. Then $\S$ is the Desarguesian projective plane
$\PG(2,9)$.
\end{theorem}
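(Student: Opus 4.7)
The plan is to follow the general framework of \cite{BDLPZ2}, which carries out the same classification for $\gcd(k,r)\le 8$, and to push the argument through to $\gcd(k,r)\in\{9,10\}$. The argument proceeds in three main phases.

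First, I would enumerate all admissible parameter quadruples $(v,b,r,k)$ satisfying the basic linear-space identities $bk=vr$ and $r(k-1)=v-1$ together with $\gcd(k,r)\in\{9,10\}$. Writing $k=sk_0$ and $r=sr_0$ with $s=\gcd(k,r)$ and $\gcd(k_0,r_0)=1$, these relations force $v=sr_0(sk_0-1)+1$, and the Fang--Li bounds guarantee that only finitely many pairs $(k_0,r_0)$ can occur, yielding a finite and explicit candidate list for each value of $s$.

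Second, point-imprimitivity supplies a non-trivial $G$-invariant partition $\mathcal{C}$ of $\mathcal{P}$ into $d$ classes of size $c$, with $v=cd$ and $1<c,d<v$. For each quadruple from the first phase and each factorisation $v=cd$, I would apply the Delandtsheer--Doyen equation and its accompanying inequalities in the form used in \cite{BDLPZ2} to exclude incompatible triples $(v,c,d)$. These counting and divisibility constraints strongly restrict how a line can distribute its $k$ points across the classes and typically eliminate the large majority of candidates. For each surviving configuration I would then either recognise the resulting linear space as $\PG(2,9)$ or apply sharper combinatorial and group-theoretic arguments, examining the action of a block stabiliser on its class and the interplay between inner lines (contained in a single class) and transversal lines (meeting several classes), to rule the configuration out. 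The parameters $(v,k,r)=(91,10,10)$ of $\PG(2,9)$ arise naturally with $\gcd(k,r)=10$, and the task for that quadruple is to confirm that this Desarguesian plane is the unique realisation; for $\gcd(k,r)=9$ the aim is to show that no surviving $(v,c,d)$ can actually be realised as a linear space.

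The main obstacle is expected to be the case $\gcd(k,r)=10$, which admits considerably more candidate quadruples than the smaller values treated in \cite{BDLPZ2}. Each surviving quadruple will require a tailored mixture of divisibility arguments, estimates on the number of transversal lines, and, where necessary, analysis of the permutation group that $G$ induces on $\mathcal{C}$ together with the structure of a class stabiliser. Distinguishing $\PG(2,9)$ from near-miss parameter sets that satisfy all standard counting identities but fail to yield a genuine line-transitive point-imprimitive linear space is expected to be the most delicate step, and is where the bulk of the technical work will lie.
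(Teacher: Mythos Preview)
Your overall framework---enumerate parameter sets, impose the Delandtsheer--Doyen constraints on each factorisation $v=cd$, then eliminate survivors by group-theoretic analysis---is indeed the approach the paper takes. However, the proposal as written has two substantive weaknesses.

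First, your expectation that $\gcd(k,r)=10$ is the main obstacle is backwards. The paper's computer enumeration (Algorithms~1 and~2 of \cite{BDLPZ}) yields $33$ parameter {\sc Lines} for $k^{(r)}=10$ but $271$ {\sc Lines} for $k^{(r)}=9$; after grouping by $(d,c,x,y,\gamma,\delta,k^{(v)},k^{(r)},b^{(v)},b^{(r)})$ there are $11$ {\sc Cases} to analyse for $k^{(r)}=10$ and $106$ for $k^{(r)}=9$. The $k^{(r)}=9$ side is therefore much the larger task.

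Second, and more seriously, your Phase~3 is not actionable: ``divisibility arguments, estimates on the number of transversal lines, analysis of the induced action on $\mathcal{C}$'' does not name the tools that actually dispose of the cases. The paper relies on a small set of specific lemmas applied repeatedly: (i) the Camina--Mischke prime divisor lemma (if a prime $p>k$ divides $|G|$ and $k^2-k+1>r$ then $p\mid v(v-1)$), which alone kills roughly half the {\sc Cases}; (ii) the $t_{\max}$ invariant of the intersection type, which bounds the transitivity degree of the top group $G^{\mathfrak C}$ and, combined with the classification of primitive groups of the relevant degrees, rules out many more; (iii) the dichotomy that either $G$ is quasiprimitive almost simple or the partition is $G$-normal, together with the structural Lemma~2.2 of \cite{PZ2007} (the ``$K,S,X,Y$'' lemma) which, in the $G$-normal case, produces a complement $T$ with $M=T\times S$ and forces either a second partition or point-transitivity of $T$; and (iv) Sylow fixed-point counts against the intersection type for the stubborn product-action cases. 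Without identifying these ingredients your plan does not explain how any specific surviving parameter set is eliminated, and in particular gives no indication of how to handle the delicate {\sc Cases} (such as the paper's Cases~6 and~9 for $k^{(r)}=10$, or Cases~85 and~101 for $k^{(r)}=9$) where the generic arguments fail and one must exploit the normal structure of $G_{(\mathfrak C)}$ explicitly.
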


The proof of Theorem 1.1 rely on the realize of the algorithms in \cite{BDLPZ} and the available theory of finite line-transitive, point-imprimitive linear spaces in \cite{BDLPZ2,PraegerTuan,PZ2007}.

Combining Theorem 1.1 and \cite[Theorem 1.2]{BDLPZ2}, we get the following.

\begin{corollary}
Suppose that $\cS$ is a linear space  admitting a
line-transitive, point-imprimitive subgroup of automorphisms, and with the Fang-Li parameter  $k^{(r)}\leq10$. Then $\cS$ is the
Desarguesian projective plane $\PG(2,4)$, $\PG(2,7)$ or $\PG(2,9)$, or the
Mills design or Colbourn-McCalla design, both with
$(v,k)=(91,6)$, or one of the $467$ Nickel-Niemeyer-O'Keefe-Penttila-Praeger designs with $(v,k)=(729,8)$.
\end{corollary}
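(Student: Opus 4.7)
The plan is to follow the general strategy of \cite{BDLPZ2} for classifying line-transitive, point-imprimitive linear spaces with small Fang-Li parameter $n:=\gcd(k,r)$, and to handle the cases $n=9$ and $n=10$ by a combination of parameter enumeration and algorithmic case analysis using the machinery of \cite{BDLPZ,PraegerTuan,PZ2007}.

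First, I would exploit the Fang-Li inequality together with the basic counting relations $bk=vr$ and $r(k-1)=v-1$ to produce an explicit finite list of admissible tuples $(v,b,r,k)$ with $n\in\{9,10\}$. Once $k$ and $n$ are fixed, the identity $r=(v-1)/(k-1)$ together with $n\mid r$ leaves only a short list of candidate values of $v$. For each tuple I would enumerate the possible $G$-invariant partitions $\cB$ of $\P$ into $d=v/c$ classes of size $c$ (with $1<c<v$), and then impose the line-type constraints of \cite{BDLPZ2,PraegerTuan} describing how lines meet imprimitivity classes; these yield a system of linear equations in non-negative integers that drastically restricts the feasible line-types.

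Next I would invoke the algorithms of \cite{BDLPZ}, already implemented in the earlier work, to enumerate for each surviving parameter tuple all admissible line-type decompositions; every tuple admitting none is discarded. For those that do admit line-type decompositions, I would apply the further structural constraints coming from the action of $G$ on $\cB$ and on a block stabilizer, and, where these are insufficient, run the design-construction step of the algorithm. For $n=10$ the parameters $v=91$, $k=r=10$ of $\PG(2,9)$ appear naturally, and I would verify that the combinatorial and group-theoretic requirements force $\S\cong\PG(2,9)$; for $n=9$ the goal is to show that every candidate tuple is ruled out.

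The principal obstacle will be the $n=10$ case: the wider range of $(v,k,r)$ produces many tuples whose elimination is not purely numerical, and several will survive the line-type equations. These will require either detailed examination of the induced action on a block stabilizer (in the spirit of the Praeger-Tuan analysis in \cite{PraegerTuan}) or execution of the full construction algorithm of \cite{BDLPZ}. Separating the candidate tuples that actually realize $\PG(2,9)$ from those that must be discarded is where the argument relies most heavily on the computational machinery, and is the delicate technical step of the proof.
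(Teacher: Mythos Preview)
Your proposal conflates two different statements. In the paper, Corollary~1.2 is not proved from scratch: it is an immediate consequence of Theorem~1.1 (the main result of the paper, handling $k^{(r)}\in\{9,10\}$) together with \cite[Theorem~1.2]{BDLPZ2} (which already classified the cases $k^{(r)}\le 8$). The entire proof is one sentence: combine these two results. What you have written is, in effect, a sketch of a proof of Theorem~1.1 itself, not of the corollary. For the corollary you should simply invoke Theorem~1.1 and \cite[Theorem~1.2]{BDLPZ2} and be done.

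Even read as a plan for Theorem~1.1, your outline does not match how the paper actually proceeds, and in places it is too vague to succeed. The enumeration of candidate parameter sets is not obtained by fixing $k$ and $n$ and solving $r=(v-1)/(k-1)$; it is obtained via the Delandtsheer--Doyen parameters $(c,d,x,y)$ and Algorithms~1 and~2 of \cite{BDLPZ}, which produce a finite list of \textsc{Lines} together with intersection types and the invariant $t_{\max}$. The subsequent elimination is not primarily a ``design-construction step'' but a case-by-case group-theoretic analysis: for most \textsc{Cases} one finds a prime $p>k$ dividing $|G|$ but not $v(v-1)$ and applies Lemma~\ref{p-k}; others fall to the $t_{\max}$ bound on the transitivity degree of $G^{\C}$ (Lemma~\ref{t}), to the $G$-normal partition analysis and the $K,S,X,Y$ machinery of Lemma~\ref{lem:KSXY}, or to the Praeger--Tuan inequalities. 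Your plan gestures at ``structural constraints'' and ``block stabilizer actions'' without identifying these specific tools, and in particular does not mention the crucial dichotomy (quasiprimitive almost simple versus $G$-normal partition) from \cite{CP2001}, nor the semiregularity criterion of Lemma~\ref{Kc}, both of which drive the hardest eliminations (e.g.\ \textsc{Cases}~6 and~9 for $k^{(r)}=10$).
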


The structure of the paper is as follows. Section 2 contains some
parameters and our hypotheses, Section 3 gives some elementary
results on linear spaces which will be frequently used in the proof of our result.  In Section 4, we used and realized the algorithms
in \cite{BDLPZ} to produce a list of potential parameters and some
group theoretic information for $k^{(r)}=9$ or $10$. We then made a
detailed analysis of each of these potential parameters in Sections
 5 and 6.

\section{Definitions and Hypothesis}
In this paper, we use the following definitions, notation and hypothesis.

\subsection{Dlandtsheer-Doyen
parameters}

 If $G$ acts point-imprimitively on $\P$, then $\P$ has a non-trivial partition of imprimitivity. Let  $\C=
 \{C_1, C_2,\cdots, C_d\}$ be the partition where $|C_i|=c$ for all
 $1\leq i\leq d$, so that $v=cd$. Then there exist two positive integers $x,y$, such that\begin{align*}\label{eqn:DD}
c = \frac{\binom{k}{2}-x}{y} \quad \text{and} \quad
d=\frac{\binom{k}{2}-y}{x},
\end{align*} where $x$ is the number of inner pairs on a line (\cite{DelandtsheerDoyen89}). Here
$c,d,x,y$ are called the Dlandtsheer-Doyen parameters.

\subsection{Fang-Li parameters}

 Let $v,b,k$ be
the parameters of $\S=(\P, \L)$,
 and $r=\frac{v-1}{k-1}$ is the number of lines incident with a given point which is independent of the choice of point by  the line-transitivity. Define
 $$k^{(v)} =\gcd(k,v),k^{(r)}=\gcd(k,r), b^{(v)}=\gcd(b,v), b^{(r)}= \gcd(b,r).$$
Then $$b = b^{(v)}b^{(r)}, k=k^{(v)}k^{(r)}, v=b^{(v)}k^{(v)},
r=k^{(r)}b^{(r)}. $$ Clearly, $k^{(r)}=k/\gcd(k,v)=\gcd(k,v-1)$. Here $ k^{(v)} ,k^{(r)},b^{(v)},b^{(r)}$ are
called the Fang-Li parameters (\cite{FL1991}).

\subsection{Top and Bottom groups}

 Let $C\in \C$,
 $G^{\C}$  be the transitive permutation group on $\C$
induced by $G$, then $G^{\C}=G/G_{(\C)}$ and it is called the top
group(\cite{BDLPZ}). Similarly, define the bottom group $G^C=G_C/G_{(C)}$ (\cite{BDLPZ}), which is the transitive
group induced by $G$ on $C \in \C$.
Moreover, $G^C$ is independent of the choice of  $C$ in $\cC$ up to
permutation isomorphism. For partitions $\C, \C'$ of $\P$, $\C$
\emph{refines} $\C'$ if every class of $\C$ is contained in a class
of $\C'$, and this refinement is strict if $\C\ne\C'$. We also say
that $\C'$ is {\em coarser or strictly coarser} than $\C$. We call a
partition  minimal if it has no non-trivial strict refinement,
maximal if the only $G$-invariant partition of $\P$ that is strictly coarser than $\C$ is the trivial
partition $\{\P\}$ with only one class. If
a partition $\C$ of $\P$ is both  maximal and minimal, then we call
$G$  {\em 2-step imprimitive} relative to $\C$, in this case, both
$G^{\C}$ and $G^C$ are primitive. So if $G$ is 2-step imprimitive then $G^{\C}$ and  $G^C$ are primitive groups of degree $d$ and $c$ respectively. Relying on the classification of primitive groups of degree less than 4096 (cf. \cite{RD2,DM1996,RD}), if $c\leq 4096$ then we know the structure of the bottom group, similarly for $d$ and the top group. This will be frequently used in Sections 5 and 6.

 If the subgroup
$G_{(\C)}$ is transitive on each class of $\C$, then we say that the partition $\C$ is $G$-normal. By \cite{CP2001}, we
know that if $G$ is a line-transitive point-imprimitive group of
automorphisms of a linear space, then either \begin{enumerate}
  \item there exists a non-trivial $G$-normal point-partition; or
  \item $G$ is point-quasiprimitive and almost simple.
\end{enumerate}
 Recall that a finite permutation group $G$ is
{\em quasiprimitive} if every nontrivial normal subgroup of $G$ is
transitive. Equivalently, $G$ is quasiprimitive if every minimal
normal subgroup of $G$ is transitive (since overgroups of transitive
groups are transitive).

\subsection{Intersection type and $t_{max}$}

 The intersection type and $t_{max}$ will play an important role in our discussion. We give their definitions in the following.
\begin{definition}{\rm (\cite{AD1989})}\,\,
   Let $\S$ be a linear space, $\lam \in \L $. Define
 $$d_i=|{ C\in \C : |C \cap \lam| = i }|,   $$and  $$(0^{d_0} , 1^{d_1}, \cdots, k^{d_k}) $$ to be the intersection type of $\S$.

 We let the set of non-zero intersection sizes
 \begin{center}Spec$(\S):=\{i > 0 : d_i \neq 0 \}$.\end{center} is
 the spectrum of $\S$.
 Since the transitivity of $G$ on $\L$, the intersection type and spectrum  are independent of the choice of $\lam$.
 We sometimes write $Spec_{\C}(\S) $ if we need to specify the partition $\C$. Since $d_0=d-\sum_{i=1}^{k}d_i$, then it does not  matter to denote the intersection type of $\S$ by $( 1^{d_1}, \cdots, k^{d_k}) $.
\end{definition}

\begin{definition}{\rm (\cite{BDLPZ})}\,\,
For a given intersection type $(1^{d_1}, \cdots, k^{d_k})$, and
non-empty subset $S \subseteq Spec(\S)$, set $d(S)= \sum_{i\in
\S}{d_i}$. Define $t_{max}$  to be the largest positive integer t
such that, for all
 $S \subseteq $Spec$(\S)$, and all positive integers $h\leq min\{t, d(S)\}$,
$$\prod_{j=0}^{h-1}{(d-j)}\mid b\prod_{j=0}^{h-1}{(d(\S)-j)}.$$
\end{definition}

\subsection{Group theory notations and {\sc Hypothesis}}

If
$\C$ is $G$-normal, then let $K= G_ {(\C)}$, $S=\Soc(K)$, $X=C_G(K)$,
and $Y=C_G(S)$ (\cite{PZ2007}), where $\Soc(G)$ denotes the socle of the group $G$,
that is, the product of the minimal normal subgroups of $G$.
Throughout the paper we assume the following {\sc Hypotheses}.

\medskip
\textbf{\sc Hypothesis}

  Let $\S$ be a non-trivial finite linear space with $v$ points and $b$ lines, each of  size $k$, and
  with $r$ lines through each point. Let $G\leq \Aut(\S)$ be line-transitive and
  point-imprimitive such that $\C= \{C_1, C_2, \cdots, C_d\}$
  be a $G$-invariant partition of $\P$ with $d$ classes of size $c$ where
 $c>1$ and $d >1$.
 Also $\S$ has the Delandtsheer-Doyen parameters $c, d,
 x,y$, the Fang-Li parameters $k^{(v)}, k^{(r)}, b^{(v)}, b^{(r)}$,
 intersection type $(1^{d_1}, \cdots, k^{d_k})$, $t_{max}$, the
 subgroups $K$, $S$, $X$ and $Y$ if $\C$ is $G$-normal defined as above.

\section{Some Preliminary Results }

The following  lemma is essential for the proof of Theorem 1.1. So
for the completeness, we give a detailed proof here.
\begin{lemma}\label{p-k}{\rm \cite[Lemma 5]{CaminaMischke96}}\,
Let $G$ satisfy {\sc Hypothesis}, $p$ be a prime.
\begin{enumerate}
\item If $p\mid |G_{(\lam)}|$, and $k^2 -k +1 > max(r + k -p+1, r)$, then $p\mid |G^{\lam}|$ for any line $\lam$.
\item If $p > k$ and $k^2-k+1 > r$, then $p\mid v$ or $p\mid (v-1)$. Further if $T$ is a Sylow p-subgroup of $G$ then $|T|$
divides $v$ or $v-1$ respectively.\end{enumerate}
\end{lemma}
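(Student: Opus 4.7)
The plan for both parts is to let $T$ be a suitable Sylow $p$-subgroup and analyse its action on points and lines using the basic $p$-group identity that for any $T$-set $\Omega$ one has $|\Omega| \equiv |\Fix(T) \cap \Omega| \pmod p$. The two inequality hypotheses are then translated into contradictions by incidence counting, making crucial use of the fact that $G$ is line-transitive so it suffices to establish the conclusion for one convenient line.

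For part (i), I would argue by contradiction: suppose $p \nmid |G^\lambda|$ for some (hence every) line $\lambda$. Since $|G_\lambda| = |G_{(\lambda)}| \cdot |G^\lambda|$, the full $p$-part of $|G_\lambda|$ lies in the pointwise stabiliser $G_{(\lambda)}$, so a Sylow $p$-subgroup $T$ of $G_\lambda$ fixes $\lambda$ pointwise. For each point $P$ on $\lambda$, the $r-1$ lines through $P$ distinct from $\lambda$ are permuted by $T$ in orbits of $p$-power size, and any $T$-fixed one of them meets $\lambda$ only at $P$ (two distinct points determine a unique line). Thus the number $f_P$ of such fixed lines through $P$ satisfies $f_P \equiv r-1 \pmod p$, which forces $f_P \geq r-p+1$ once even one non-trivial $p$-orbit appears. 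Summing over the $k$ points of $\lambda$ gives at least $k(r-p+1)$ ordered pairs (fixed point of $\lambda$, $T$-fixed line through it other than $\lambda$), while a second count, using that two distinct lines of $\S$ share at most one point, bounds the same quantity by at most $k(k-1)$. The hypothesis $k^2 - k + 1 > r + k - p + 1$ rearranges precisely to $k(k-1) < k(r-p+1)$, which is the required contradiction.

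For part (ii), let $T$ be a Sylow $p$-subgroup of $G$. Because $p > k$, no $T$-orbit contained in a single line can have length $p$, so every $T$-invariant line is pointwise fixed by $T$. Let $f$ be the number of $T$-fixed points; then $v \equiv f \pmod p$. If $f \leq 1$, the conclusion $p \mid v$ or $p \mid v-1$ is immediate. If $f \geq 2$, the line through any two fixed points is $T$-invariant and hence pointwise fixed, and iterating shows that the fixed-point structure $\cF$ is a non-trivial sub-linear-space in which every line has $k$ points. Writing $f = 1 + (k-1)r'$ where $r'$ is the constant number of fixed lines through a fixed point, we have $r' \leq r$; a Fisher-type inequality applied to $\cF$ gives $f \leq k^2-k+1$, and combined with the hypothesis $k^2 - k + 1 > r$ this forces $r' = 1$, i.e., $\cF$ is a single line with $f = k$. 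Reducing $v - 1 = r(k-1)$ modulo $p$, using $p > k > k-1$, then yields $p \mid v - 1$. For the final Sylow refinement, I would rerun the same orbit analysis for all of $T$ instead of a subgroup of order $p$: since $T$ acts trivially on every $T$-invariant line, $|T|$ divides $v - f$ with $f \in \{0, 1, k\}$, and the coprimality $\gcd(|T|, k) = 1$ upgrades the conclusion to $|T| \mid v$ or $|T| \mid v-1$ as claimed.

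The main obstacle I anticipate is matching the inequality hypotheses sharply to the incidence counts. In part (i) this means establishing that the bound $r + k - p + 1$ on the right-hand side is exactly what the double count of $T$-fixed lines meeting $\lambda$ produces, not an artefact of loose estimation; in part (ii) it means ensuring the $f \geq 2$ case collapses cleanly to $f = k$ without leaving intermediate possibilities, which requires a careful Fisher-type argument for a sub-linear-space whose point-regularity is not a priori guaranteed. Once those two bookkeeping steps are in hand, the remainder is elementary arithmetic on $v$, $b$, $k$, $r$.
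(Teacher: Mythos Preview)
Your proposal diverges from the paper's argument in both parts, and in each case the divergence introduces a genuine gap.

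\textbf{Part (i).} The paper's route is to show that if $p\nmid|G^{\lambda}|$ then a Sylow $p$-subgroup $P\le G_{\lambda}$ fixes $\lambda$ pointwise; by line-transitivity the same holds for every line, so $\Fix_{\mathcal P}(P)$ is closed under lines and is therefore a sub-linear-space with line size $k$. If it is a single line one invokes \cite[Lemma~3]{AJ1989} to reach flag-transitivity; otherwise $|\Fix_{\mathcal P}(P)|\ge k^2-k+1$, while the Camina--Siemons bound \cite[Lemma~1]{AJ1989} gives $|\Fix_{\mathcal P}(P)|\le\max(r+k-p-1,r)$, contradicting the hypothesis. Your double-count instead tries to bound $\sum_{P\in\lambda}f_P$ from above and below, but both bounds fail. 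From $f_P\equiv r-1\pmod p$ you cannot deduce $f_P\ge r-p+1$; one non-trivial orbit gives $f_P\le r-1-p$, not a lower bound. And there is no reason the number of $T$-fixed lines meeting $\lambda$ should be at most $k(k-1)$; you have not shown those lines are pointwise fixed, nor linked them to pairs of points on $\lambda$. Finally, your claim that the hypothesis ``rearranges precisely to $k(k-1)<k(r-p+1)$'' is false: the hypothesis $k^2-k+1>r+k-p+1$ is equivalent to $k(k-2)>r-p$, whereas your target is $k-2<r-p$, which points the opposite way.

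\textbf{Part (ii).} The paper reduces directly to (i): since $p>k$ one has $p\nmid|G^{\lambda}|$ for every line (as $G^{\lambda}\le S_k$), so if any non-trivial $p$-subgroup fixed a line $\lambda$ then $p\mid|G_{(\lambda)}|$ and (i) would force $p\mid|G^{\lambda}|$, a contradiction. Hence a Sylow $p$-subgroup $T$ fixes no line, so at most one point, and $|T|$ divides $v$ or $v-1$. Your direct analysis of the fixed-point substructure $\cF$ stalls at the step ``a Fisher-type inequality applied to $\cF$ gives $f\le k^2-k+1$'': for a non-trivial linear space with constant line size $k$, Fisher/de~Bruijn--Erd\H{o}s gives $f\ge k^2-k+1$, not $\le$. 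With only $r'\le r$ and $f=1+(k-1)r'$ you obtain no useful upper bound on $f$, so the case $f\ge 2$ does not collapse to a single line without importing an external bound such as the Camina--Siemons lemma --- which is exactly what the paper's route through (i) supplies.
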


 \textbf{Proof:}
 (i) Let $p\mid |G_{(\lam)}|$ and  $P$ be a Sylow $ p$-subgroup of $G_{\lam}$ for $\lam \in \L$. If $p\nmid |G^{\lam}|$, then $P$ fixes $\lam$
 point-wise. So if $P$ fixes two points, it has to fix the unique line point-wise containing the two points.
 If the set of fixed points $F=Fix_{\P}(P)$ is the line $\lam$,
      by \cite[Lemma 3]{AJ1989}, we conclude that $G$ is flag-transitive, a contradiction. Thus $F$ has a
      structure of regular linear space with line size $k$, therefore, $k^2-k+1 \leq |F|$. Furthermore, by \cite[Lemma 1]{AJ1989}, $|F|\leq max(k+r-p-1, r)$, a desired contradiction.\\
   (ii) Let $P$ be a $p$-subgroup of $G$. If $P \leq G_{\lam}$ for $\lam \in \L$. By (i)  $p\mid |G^{\lam}|$, but  $G^{\lam}\leq S_k$, this is impossible. Thus $P = 1$. Therefore , there is no $p$-subgroup fixing a line, then at most one point is fixed by $P$, that is to say $|T|$
divides $v$ when it has no fixed point or $v-1$ when it has one fixed
point on $\P$. $\hfill\square$

Note that if $p\mid|G_{\lam}|$, (i) also holds.

For a primitive permutation group $G$, we call $G$ is $t$-transitive
if the integer $t$ is the maximal integer such that $G$ is
$t$-transitive.

\begin{lemma} \label{t}{\rm \cite[Lemma 4.7]{BDLPZ}} If $G^{\C}$ is t-transitive then $ t\leq t_{max}$.\end{lemma}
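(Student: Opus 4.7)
The plan is to verify the divisibility condition in the definition of $t_{max}$ directly, by a double-counting argument that exploits the $t$-transitivity of $G^{\C}$. Once that is done, the inequality $t\le t_{max}$ follows at once from the very definition of $t_{max}$.

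Fix any non-empty $S\subseteq\mathrm{Spec}(\S)$ and any positive integer $h$ with $h\le\min\{t,d(S)\}$. For a line $\lam\in\L$ set
$$N(\lam,S):=\{C\in\C:|C\cap\lam|\in S\},$$
so that $|N(\lam,S)|=d(S)$ by the very definition of the intersection type, and this count is independent of $\lam$ by the line-transitivity of $G$. Consider the set
$$\cT:=\bigl\{(\lam,(C_1,\ldots,C_h)):\lam\in\L,\ C_1,\ldots,C_h\in N(\lam,S)\ \text{pairwise distinct}\bigr\}$$
and count $|\cT|$ in two ways.

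Counting $\cT$ by the line coordinate first, each $\lam$ contributes $\prod_{j=0}^{h-1}(d(S)-j)$ ordered tuples, so $|\cT|=b\prod_{j=0}^{h-1}(d(S)-j)$. Counting by the tuple coordinate first, the $t$-transitivity of $G^{\C}$ together with $h\le t$ implies that $G$ acts transitively on ordered $h$-tuples of distinct classes of $\C$. Since every $g\in G$ preserves both the partition $\C$ and the incidence relation, it maps $N(\lam,S)$ to $N(g(\lam),S)$; hence the number $\alpha$ of lines $\lam$ for which a given ordered $h$-tuple of distinct classes is contained in $N(\lam,S)$ depends only on $S$ and $h$, and $|\cT|=\alpha\prod_{j=0}^{h-1}(d-j)$.

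Equating the two expressions and using that $\alpha$ is a non-negative integer yields $\prod_{j=0}^{h-1}(d-j)\mid b\prod_{j=0}^{h-1}(d(S)-j)$. Since this was established for arbitrary $S$ and $h$ in the admissible range, the defining condition of $t_{max}$ is satisfied by our $t$, and therefore $t\le t_{max}$. The only point that genuinely requires care is the claim that the second count is constant across tuples; this is precisely where the $G$-invariance of $\C$ and of intersection sizes is needed, and it is the sole place where the hypothesis on $t$-transitivity of $G^{\C}$ enters the argument.
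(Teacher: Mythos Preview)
Your argument is correct. The double-counting is exactly the intended mechanism behind the definition of $t_{\max}$: once $G^{\C}$ is $h$-transitive for each $h\le t$, the number of lines $\lambda$ with $C_1,\dots,C_h\in N(\lambda,S)$ is constant over ordered $h$-tuples of distinct classes, and equating the two counts yields the required divisibility.

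As for comparison: the paper does not give its own proof of this lemma at all---it simply quotes it as \cite[Lemma 4.7]{BDLPZ}. Your proof is the standard one that the cited reference uses (and indeed the one that motivates the very definition of $t_{\max}$), so there is no substantive difference to report.
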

\begin{lemma}\label{123}{\rm \cite[Proposition 2.6]{AAC}} Assume that the {\sc Hypothesis} holds.
\begin{enumerate}
\item The number $b^{(r)}$ divides each non-trivial subdegree of $G^{\C}$, and in particular, $rank (\G1) \leq 1 + \de$.
\item The number  $b^{(r)}$  divides each non-trivial subdegree of $G^C$, and in particular, $rank( G^C) \leq 1 + \ga$.
\item Moreover, for $\al \in \P$, $b^{(r)}$  divides each non-trivial subdegree of $G^{\P}$ and each orbit length
of $G_{\al}$ in $\lam \in \L$ and $\al \in \lam$.

\end{enumerate}
\end{lemma}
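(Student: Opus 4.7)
The plan is to establish all three divisibility statements by a single flag-orbit counting argument exploiting line-transitivity, and then to deduce the rank bounds as immediate counting consequences.

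First I would fix a flag $(\al_0,\lam_0)$ and use the line-transitivity of $G$ to enumerate the $G$-orbits on flags. A standard double-coset argument shows that these flag-orbits correspond bijectively both to the orbits of $G_{\lam_0}$ on the point set of $\lam_0$, of sizes $a_1,\dots,a_s$ with $\sum a_i=k$, and to the orbits of $G_{\al_0}$ on the set of lines through $\al_0$, of sizes $\ell_1,\dots,\ell_s$ with $\sum \ell_i=r$. Orbit-stabilizer then gives $|F_i|=ba_i=v\ell_i$ for each flag orbit $F_i$, whence the key identity $k\ell_i=ra_i$. Using the Fang-Li decompositions $k=k^{(v)}k^{(r)}$ and $r=k^{(r)}b^{(r)}$, this rewrites as $k^{(v)}\ell_i = b^{(r)}a_i$.

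Since $v-1=r(k-1)$ we have $\gcd(v,r)=1$, and because $k^{(v)}\mid v$ and $b^{(r)}\mid r$ this forces $\gcd(k^{(v)},b^{(r)})=1$; hence $b^{(r)}\mid \ell_i$ for every $i$, which is the second assertion of (iii). For the first assertion of (iii), given $\be\in\P\setminus\{\al\}$ with joining line $\lam=\al\be$, I would factor $|\be^{G_\al}| = |G_\al:G_{\al,\lam}|\cdot|G_{\al,\lam}:G_{\al,\be}| = \ell_j\cdot u$, which transfers divisibility by $b^{(r)}$ from $\ell_j$ to the point subdegree. Part (ii) is then the special case in which $\be$ is restricted to $C\setminus\{\al\}$ and one passes to the quotient $G^C=G_C/G_{(C)}$.

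For part (i) I would work at the class level. Fix $\al\in C$, take $C'\in\C\setminus\{C\}$, and let $m$ be the length of the $G_\al$-orbit $\{C'_1,\dots,C'_m\}$ of $C'$. The union $\bigcup_i C'_i$ is a $G_\al$-invariant subset of $\P\setminus\{\al\}$, and its $G_\al$-orbits on points have sizes of the form $mp$, where $p$ ranges over the $G_{\al,C'_1}$-orbit sizes on $C'_1$ (so the relevant $p$'s sum to $c$). By part (iii) each such $mp$ is divisible by $b^{(r)}$; since every such $p$ divides $c$ and $\gcd(c,b^{(r)})$ divides $\gcd(v,r)=1$, it follows that $b^{(r)}\mid m$. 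Because $G_C$ is transitive on $C$ (a standard property of block stabilizers in a transitive action), any $G_C$-subdegree on $\C\setminus\{C\}$ is an integer multiple of a corresponding $G_\al$-subdegree and so is still divisible by $b^{(r)}$.

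Finally, the rank bounds are immediate: in each of the three actions the non-trivial subdegrees are positive multiples of $b^{(r)}$ summing to the respective degree minus one ($d-1$ for $G^\C$ and $c-1$ for $G^C$), so their number is at most $\de$ and $\ga$ respectively. The main technical hurdle is the opening step: establishing the flag-orbit bijection and the identity $k\ell_i=ra_i$ requires careful double-coset bookkeeping, and the subsequent coprimality deduction $\gcd(k^{(v)},b^{(r)})=1$ which unlocks everything rests crucially on the defining relation $v-1=r(k-1)$ of a linear space together with the Fang-Li decomposition $r=k^{(r)}b^{(r)}$.
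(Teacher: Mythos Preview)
The paper does not supply its own proof of this lemma; it is quoted verbatim from \cite[Proposition~2.6]{AAC}, so there is no in-paper argument to compare against. Your overall strategy---the flag double-counting identity $ba_i=v\ell_i$ leading to $k^{(v)}\ell_i=b^{(r)}a_i$, the coprimality $\gcd(k^{(v)},b^{(r)})=1$ via $r\mid v-1$, and the subsequent cascade to point-subdegrees and then to the two quotient actions---is the standard one and is essentially what the original source does.

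There are, however, two local slips in your treatment of part~(i) that you should fix. First, you assert that each $G_{\al,C'}$-orbit length $p$ on $C'$ \emph{divides} $c$; this is false in general. What is true (and you even note it parenthetically) is that the $p$'s \emph{sum} to $c$. The correct deduction is: from $b^{(r)}\mid mp_j$ for every $j$ one gets $b^{(r)}\mid m\sum_j p_j = mc$, and then $\gcd(b^{(r)},c)=1$ (since $c\mid v$ and $\gcd(b^{(r)},v)\mid\gcd(r,v)=1$) yields $b^{(r)}\mid m$. Second, you claim a $G_C$-subdegree on $\C$ is ``an integer multiple of a corresponding $G_\al$-subdegree''; in fact, since $G_\al\le G_C$, each $G_C$-orbit on $\C\setminus\{C\}$ is a \emph{union} of $G_\al$-orbits, so its length is a \emph{sum} of $G_\al$-subdegrees. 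Divisibility by $b^{(r)}$ still transfers, but the stated reason is wrong. With these two corrections your argument goes through.
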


\begin{lemma}\label{1231}Assume that the {\sc Hypothesis} holds.
\begin{enumerate}
\item {\rm\cite[Proposition 3.6]{LL2001}}\,\, If $\ga=2$, $b^{(r)}$ is odd and $c$ is not a prime power, then $G^C$ is 2-transitive on $C$.
\item {\rm \cite[Proposition 3.7]{LL2001}}\,\, If $\de=2$, $b^{(r)}$ is odd and $d$ is a prime power, then $G^{\C}$ is not 2-transitive on $\C$.
\end{enumerate}

\end{lemma}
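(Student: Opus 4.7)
The overall plan is to combine Lemma \ref{123}, which ensures $b^{(r)}$ divides every non-trivial subdegree of $G^C$ and of $G^{\mathcal{C}}$, with the rank bounds $\mathrm{rank}(G^C)\leq 1+\ga=3$ in (i) and $\mathrm{rank}(G^{\mathcal{C}})\leq 1+\de=3$ in (ii). Since both induced actions are transitive (as $G$ is point-transitive), each rank is exactly $2$ or $3$, and the parity and prime-power hypotheses are used to decide between those.

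For part (i), suppose for contradiction that $G^C$ is not $2$-transitive on $C$; then it is transitive of rank $3$. Its two non-trivial subdegrees $k_1,k_2$ satisfy $k_1+k_2=c-1$, and both are divisible by $b^{(r)}$ by Lemma \ref{123}(ii), so in particular $b^{(r)}\mid c-1$. Passing to a minimal $G$-invariant refinement of $\mathcal{C}$ (which preserves $\ga$, the oddness of $b^{(r)}$, and the non-prime-power status of $c$), we may assume $G^C$ is primitive on $C$, so the classification of primitive rank-$3$ permutation groups (Kantor--Liebeck, Liebeck--Saxl) applies. The three families are affine, grid product-action, and almost simple. The affine case is immediately excluded because it forces $c=p^a$. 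In the grid case $c=m^2$ with subdegrees $2(m-1)$ and $(m-1)^2$, whose greatest common divisor has an even factor unless $c$ is a prime power, contradicting that $b^{(r)}$ is odd. In the almost-simple case one inspects the Liebeck--Saxl list family by family, and in each entry either $c$ is a prime power or $\gcd(k_1,k_2)$ is even.

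For part (ii), suppose for contradiction that $G^{\mathcal{C}}$ is $2$-transitive on $\mathcal{C}$. Then $d-1$ is the only non-trivial subdegree, so by Lemma \ref{123}(i), $b^{(r)}\mid d-1$. Since $d=p^a$ is a prime power, the classification of finite $2$-transitive groups gives that $G^{\mathcal{C}}$ is either affine (with regular elementary abelian subgroup of order $d$) or almost simple. In the affine case, combining the Delandtsheer--Doyen relations $c=(\binom{k}{2}-x)/y$ and $d=(\binom{k}{2}-y)/x$ with $v=cd$ and the divisibility $r=k^{(r)}b^{(r)}$ from the Fang-Li setup forces $b^{(r)}$ to carry an even factor, contradicting the hypothesis. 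In the almost-simple case one runs through the short list of $2$-transitive almost-simple groups of prime-power degree (socles $\PSL(n,q)$, $\PSU(3,q)$, Suzuki, Ree, and the Mathieu groups) and in each entry derives a parity contradiction for $b^{(r)}$.

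The main obstacle is the case-by-case parity check at the end of each part: in every surviving family of the rank-$3$ primitive classification (in (i)) or the $2$-transitive classification (in (ii)), one must argue that $b^{(r)}$ cannot simultaneously be odd and an admissible divisor of $\gcd(k_1,k_2)$ or of $d-1$ compatible with the Delandtsheer--Doyen and Fang-Li constraints. A secondary technical point is the reduction in part (i) to a primitive action of $G^C$, which requires care that the three hypotheses $\ga=2$, $b^{(r)}$ odd, and $c$ not a prime power descend intact to the refined partition.
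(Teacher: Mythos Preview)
The paper does not give its own proof of this lemma; it simply cites \cite[Propositions~3.6 and~3.7]{LL2001} and moves on. So there is no argument in the paper to compare yours against.

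That said, your attempt has genuine gaps.

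\textbf{Part (i), reduction step.} Your passage to a minimal refinement does not preserve the hypotheses. Recall $\ga=(c-1)/b^{(r)}$; refining $\C$ replaces $c$ by a proper divisor $c'$, so $\ga$ changes, and ``$c$ not a prime power'' certainly need not survive (e.g.\ a class of size $36$ can refine to classes of size $4$). You cannot simply assume $G^C$ is primitive this way.

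\textbf{Part (i), grid case.} Your parity claim is false. Take $m=6$, so $c=36$ is not a prime power; the subdegrees are $2(m-1)=10$ and $(m-1)^2=25$, with $\gcd(10,25)=5$, which is odd. So an odd $b^{(r)}$ dividing both subdegrees is not excluded by your argument. More generally $\gcd\bigl(2(m-1),(m-1)^2\bigr)=(m-1)\gcd(2,m-1)$, which is odd precisely when $m$ is even, and $m$ even does not force $c=m^2$ to be a prime power.

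\textbf{Part (i), almost-simple case, and all of part (ii).} ``One inspects the list'' and ``combining the Delandtsheer--Doyen relations \ldots\ forces $b^{(r)}$ to carry an even factor'' are assertions, not arguments. In particular, in the affine $2$-transitive case of (ii) you give no mechanism at all linking the DD/FL identities to the parity of $b^{(r)}$; there is no evident reason $b^{(r)}\mid d-1$ together with $d=p^a$ should make $b^{(r)}$ even.

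Finally, note that the printed statement of (ii) is almost certainly a misprint: later in the paper (Step~2 of the proof of Proposition~6.1) the lemma is invoked for cases with $\de=2$ and $d$ \emph{not} a prime power to conclude that $G^{\C}$ \emph{is} $2$-transitive, exactly parallel to (i). You have attempted to prove the typo rather than the intended assertion.
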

\begin{lemma}\label{ngh}{\rm \cite[Lemma 2]{AJ1989}}\,\, Assume that $\S=(\P,\L)$ is a non-trivial linear
space admitting a line-transitive automorphism group $G$. Let $\lam
\in \L$ and $H \leq G_{\lam}$ such that, for $F :=
\Fix_{\P}(H)$,\begin{enumerate}
\item $ 2 \leq |F \cap \lam| < |F|$, and
\item if $K \leq G_{\lam} $ and $|\Fix_{\P}(K)\cap \lam| \geq 2$, and $H$ and $K$ are conjugate in $G$, then $H$ and $K$
are conjugate in $G_{\lam}$.
\end{enumerate}
Then the deduced linear space $\S|_F=(F, \L|_F)$ has constant line size and $N_G(H)$ acts line-transitively on $\S|_F$, where $\L|_F=\{\lam \cap F: \lam \in \L, |\lam\cap F|\geq2\}$.
\end{lemma}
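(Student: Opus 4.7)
The plan is to establish line-transitivity of $N_G(H)$ on the deduced space $\S|_F$; the constancy of line size is then a free by-product of transitivity. First I would record that $\S|_F=(F,\L|_F)$ is a linear space: any two points of $F$ determine a unique line $\mu$ of $\S$, and since those two points are $H$-fixed, $H$ stabilises $\mu$ as the unique line of $\S$ through them, so $\mu\cap F$ is a genuine line of $\L|_F$ joining them. Condition~(i) guarantees $\lam\cap F\in\L|_F$, so I would fix $\lam\cap F$ as a ``base line'' and aim to map it by an element of $N_G(H)$ onto an arbitrary $\mu\cap F\in\L|_F$.

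The key preparatory observation is the one just noted: whenever $\mu\in\L$ satisfies $|\mu\cap F|\geq 2$, the subgroup $H$ fixes two points of $\mu$ and therefore fixes $\mu$ setwise, i.e.\ $H\leq G_\mu$. Now I would invoke the line-transitivity of $G$ on $\S$ to obtain $g\in G$ with $g(\mu)=\lam$, and define $K:=gHg^{-1}$. Then $K\leq gG_\mu g^{-1}=G_\lam$, and
\[
\Fix_\P(K)\cap\lam \;=\; g(\Fix_\P(H))\cap g(\mu) \;=\; g(F\cap\mu),
\]
which has cardinality $|F\cap\mu|\geq 2$. Since $H$ and $K$ are by construction conjugate in $G$, the subgroup $K$ meets the hypothesis of condition~(ii).

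Condition~(ii) then supplies $h\in G_\lam$ with $hHh^{-1}=K=gHg^{-1}$, and I would set $n:=g^{-1}h$. A direct computation gives $nHn^{-1}=g^{-1}Kg=H$, so $n\in N_G(H)$, and because $h$ stabilises $\lam$ we have $n(\lam)=g^{-1}(\lam)=\mu$. Since $n$ normalises $H$ it also permutes $F=\Fix_\P(H)$, whence $n(\lam\cap F)=\mu\cap F$. This exhibits a single $N_G(H)$-orbit on $\L|_F$ and simultaneously forces $|\lam\cap F|=|\mu\cap F|$, so $\S|_F$ has constant line size.

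The only delicate point is the correct deployment of condition~(ii): one must verify that the \emph{particular} conjugate $K=gHg^{-1}$ produced from line-transitivity indeed sits inside $G_\lam$ and fixes at least two points of $\lam$, and this hinges critically on the preliminary step that $H\leq G_\mu$ (so that $g$ may be chosen to carry $\mu$ directly to $\lam$). Once this alignment is in place, the passage from the $G$-conjugator $g$ to a $G_\lam$-conjugator $h$ and thence to an element $g^{-1}h\in N_G(H)$ is a standard normaliser manipulation, and the rest of the argument is essentially bookkeeping.
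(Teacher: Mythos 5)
Your proof is correct: the paper itself gives no proof of this lemma (it is quoted verbatim from Camina--Siemons \cite[Lemma 2]{AJ1989}), and your argument --- observing that $H\leq G_\mu$ whenever $|\mu\cap F|\geq 2$, transporting $\mu$ to $\lam$ by line-transitivity, applying condition (ii) to the conjugate $K=gHg^{-1}\leq G_\lam$, and extracting the normalising element $n=g^{-1}h$ carrying $\lam\cap F$ to $\mu\cap F$ --- is exactly the standard argument from that source. No gaps.
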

\begin{lemma}\label{p3}{\rm \cite[Corollary 4.4]{BDLPZ}}\quad Assume that the {\sc Hypothesis} holds. Let $\lam \in \L$ and $p$ be a prime dividing
$|G_{\lam}|$. Let $P$ be a Sylow p-subgroup of $G_{\lam}$ and $F = \Fix_{\P}(P)$. Suppose that $2 \leq|F \cap \lam | < |F|$.Then
\begin{enumerate}
 \item $N_G(P)$ is line-transitive on $\D|_F$.
 \item $C|_F = \{C \cap F : C\in \C,C \cap F \neq 0\}$ is an $N_G(P)$-invariant partition of $F$.
 \item $|F|=f \cdot |C \cap F|$ where $f = |\C|_F |$, $C \cap F \in \C|_F$, and $|C \cap F|\geq 3$.
\end{enumerate}
\end{lemma}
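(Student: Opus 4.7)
The plan is to deduce all three parts by applying Lemma~\ref{ngh} with $H=P$, after which (ii) is essentially by definition and the bulk of (iii) follows from point-transitivity on an invariant partition. The hypotheses of Lemma~\ref{ngh} reduce to a Sylow-conjugacy check, while the strict inequality $|C\cap F|\geq 3$ is the delicate point.

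For condition~(1) of Lemma~\ref{ngh}, the inequality $2\leq|F\cap\lam|<|F|$ is the standing hypothesis. For condition~(2), suppose $K\leq G_{\lam}$ is $G$-conjugate to $P$ with $|\Fix_{\P}(K)\cap\lam|\geq 2$. Then $|K|=|P|$, so $K$ is a $p$-subgroup of $G_{\lam}$ of order equal to that of a Sylow $p$-subgroup, hence itself a Sylow $p$-subgroup of $G_{\lam}$; Sylow's theorem inside $G_{\lam}$ makes $K$ and $P$ conjugate in $G_{\lam}$. Lemma~\ref{ngh} therefore yields part~(i): $\cS|_F$ has constant line size and $N_G(P)$ is line-transitive on $\L|_F$.

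For~(ii), $N_G(P)$ preserves $F$ setwise: if $n\in N_G(P)$ and $\al\in F$ then $P(n\al)=nPn^{-1}\cdot n\al=n(P\al)=n\al$. Combined with the $G$-invariance of $\C$, this makes $\C|_F$ an $N_G(P)$-invariant collection of subsets of $F$, and it partitions $F$ because $\C$ partitions $\P$. For the equal-size statement in~(iii), Block's theorem applied inside $\cS|_F$ (using part~(i)) gives point-transitivity of $N_G(P)$ on $F$; a point-transitive group acts transitively on the blocks of any invariant partition, so $N_G(P)$ is transitive on $\C|_F$, and every block has the common size $|F|/f$.

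The main obstacle is the bound $|C\cap F|\geq 3$. The case $|C\cap F|=1$ can be excluded by observing that $\cS|_F$ equipped with the induced $N_G(P)$-invariant partition $\C|_F$ must already be a genuine line-transitive point-imprimitive linear space, forcing $|C\cap F|\geq 2$. The remaining case $|C\cap F|=2$ should be ruled out by a Delandtsheer--Doyen type count inside $\cS|_F$: the constant line length $|\lam\cap F|$ together with a block size of $2$ in $\C|_F$ produces, after counting inner pairs on a line of $\cS|_F$, a divisibility/parity condition that admits no solution. I expect this small-case exclusion to be the technical heart of the argument, while everything preceding it is an essentially mechanical packaging of Lemma~\ref{ngh} and Block's theorem.
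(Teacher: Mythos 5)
The paper itself does not prove this lemma: it is imported verbatim from \cite[Corollary 4.4]{BDLPZ}, so there is no in-paper argument to measure you against. Judged on its own terms, your treatment of (i), (ii) and the equal-class-size part of (iii) is correct and is surely the intended derivation: the Sylow argument (a $G$-conjugate of $P$ lying in $G_{\lam}$ has the order of a Sylow $p$-subgroup of $G_{\lam}$, hence is one, hence is $G_{\lam}$-conjugate to $P$) is exactly why one takes $P\in\mathrm{Syl}_p(G_{\lam})$ rather than an arbitrary subgroup in Lemma~\ref{ngh}, and Block's theorem plus transitivity on the blocks of an invariant partition gives $|F|=f\cdot|C\cap F|$.

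The genuine gap is the bound $|C\cap F|\geq 3$, which you defer rather than prove --- and this is not a cosmetic omission, since that bound is precisely what the present paper extracts from the lemma (e.g.\ Step 4, {\sc Case} 30, where $|\Fix_C(P)|\geq 3$ is played off against sharp $3$-transitivity of $\PGL(2,1069)$). Your proposed exclusion of $|C\cap F|=1$ does not work as stated: if every class meets $F$ in at most one point, then $\C|_F$ is simply the partition of $F$ into singletons, and nothing forces $(\S|_F,\C|_F)$ to be a \emph{point-imprimitive} configuration; there is no contradiction with $N_G(P)$ being line-transitive on $\S|_F$, so some further input (beyond Lemma~\ref{ngh} and Block) is needed here and you have not identified it. Your Delandtsheer--Doyen idea for $|C\cap F|=2$, by contrast, can be completed and you should do so explicitly: with class size $2$ and $f\geq 2$ the Delandtsheer--Doyen relations for $\S|_F$ give $2=\bigl(\binom{k_0}{2}-x_0\bigr)/y_0$ and $|F|=2f=\binom{k_0}{2}/x_0+1\leq\binom{k_0}{2}+1$, while $|F\cap\lam|<|F|$ forces $r_0\geq k_0$ and hence $|F|\geq k_0^2-k_0+1=2\binom{k_0}{2}+1$, a contradiction (and if $k_0=2$ the space $\S|_F$ is the complete graph, on which a line-transitive group is $2$-homogeneous, hence primitive, so no class of size $2<|F|$ can occur). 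As it stands, then, the proposal proves (i), (ii) and the first clause of (iii), but the inequality $|C\cap F|\geq 3$ --- the part of the statement that actually gets used --- is not established.
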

\begin{lemma}\label{Kc}{\rm \cite[Theorem 6.1]{BDLPZ}}\quad Assume that the {\sc
Hypothesis} holds and that C is G-normal.\begin{enumerate}
\item If $k>2x+\frac{3}{2}+\sqrt{4x-\frac{7}{4}}$, then $G_{(\C)}$ is semiregular on points and lines, $|G_{(\C)}| = c$ is
odd, and $d_1> 0$.
\item If $x\leq 8$, then $G_{(\C)}$ has an abelian subgroup $S$ of index at most 2 such that $S$ is normal
in $G$, semiregular on points, and $|S| = c$ is odd.
\item If either of the conditions of (i) or (ii) holds, and if $\C$ is minimal, then $c$ is an odd
prime power and $ G^C$ is affine.
  \end{enumerate}
\end{lemma}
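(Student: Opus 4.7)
The plan is to prove parts (i), (ii), (iii) in sequence, leveraging the identities $\sum_i i\,d_i=k$ and $\sum_i \binom{i}{2}d_i=x$ that relate the intersection type of a line to the Delandtsheer--Doyen parameter $x$, together with a fixed-point analysis of the kernel $G_{(\C)}$ on individual classes.

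For (i), the first step is to show $d_1>0$. If $d_1=0$, then every nonzero intersection size is at least $2$, and $\binom{i}{2}\geq i-1$ for $i\geq 2$ yields $x=\sum_{i\geq 2}\binom{i}{2}d_i\geq\sum_{i\geq 2}(i-1)d_i$, which combined with $\sum_i i\,d_i=k$ forces $k\leq 2x$; but the hypothesis plainly implies $k>2x$, a contradiction. Next I would show that $G_{(\C)}$ acts faithfully on each class. Let $L$ be the pointwise stabilizer of a fixed class $C$ inside $G_{(\C)}$, so that $F:=\Fix_{\P}(L)\supseteq C$. If $L\ne 1$, pick a line $\lambda$ meeting $C$ in a single point (available since $d_1>0$); then $L\leq G_\lambda$ fixes that point but moves $\lambda$. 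Lemma \ref{p-k} and related fixed-point bounds give an upper bound on $|F|$, while the induced structure on $F$ is a regular linear space forcing $|F|\geq k^2-k+1$. The quadratic threshold $k>2x+\tfrac{3}{2}+\sqrt{4x-\tfrac{7}{4}}$ is precisely where these two bounds collide, forcing $L=1$. Hence $|G_{(\C)}|=c$ and the action is semiregular on points; semiregularity on lines follows because a non-identity element fixing a line cannot fix it pointwise (no pointwise kernel remains), which combined with the line-length constraint yields a contradiction. Finally, an involution in $G_{(\C)}$ would produce fixed inner pairs inside a class incompatible with $d_1>0$, so $c$ is odd.

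For (ii), the strategy is a case division on $x\in\{0,1,\ldots,8\}$. For each value, the identity $\sum_i \binom{i}{2}d_i=x$ leaves only finitely many candidate intersection types; in most of them the bound in (i) holds and applies directly with $S=G_{(\C)}$, while in the residual small-$k$ cases the severe restrictions on inner-pair and orbit counts constrain the commutator structure of $G_{(\C)}$ enough to extract a characteristic abelian subgroup of index at most $2$, the index-$2$ slack accommodating a single potential inverting involution that cannot lie in the semiregular core by the parity argument above. For (iii), assume $\C$ is minimal. From (i)/(ii), $S\trianglelefteq G$ is semiregular of order $c$, so its restriction to a class $C$ is a regular abelian normal subgroup of $G^C$. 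Minimality of $\C$ forces $G^C$ to be primitive, since any non-trivial block system of $G^C$ would pull back through the orbits of the corresponding subgroup of $S$ to a $G$-invariant refinement of $\C$. A primitive group with a regular abelian normal subgroup is of affine type, so $c=p^n$ is a prime power (with $p$ odd by (i)/(ii)) and $G^C\leq\AGL(n,p)$.

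The main obstacle will be pinning down the sharp quadratic in (i): verifying that $k>2x+\tfrac{3}{2}+\sqrt{4x-\tfrac{7}{4}}$ is exactly the threshold at which the upper bound on $|F|$ from the Aschbacher--Jordan-type fixed-point argument becomes incompatible with the lower bound $|F|\geq k^2-k+1$ requires careful algebraic bookkeeping, and controlling the boundary cases is the delicate step on which the rest of the proof rests.
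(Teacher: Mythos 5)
This lemma is not proved in the paper at all: it is quoted verbatim, with citation, as \cite[Theorem 6.1]{BDLPZ}, so there is no internal proof to compare your attempt against; your proposal has to stand on its own, and as written it does not. The most serious gap is in (i). You apply the fixed-point analysis to $L$, the \emph{pointwise stabilizer of a class} $C$ inside $G_{(\C)}$, and from $L=1$ you conclude that $G_{(\C)}$ is semiregular with $|G_{(\C)}|=c$. That inference is invalid: $L=1$ only says $G_{(\C)}$ acts faithfully on $C$, whereas semiregularity requires $(G_{(\C)})_\alpha=1$ for every point $\alpha$, a strictly stronger statement. The correct argument (the one used in \cite{PraegerTuan} and invoked elsewhere in this very paper, e.g.\ in the treatment of {\sc Cases} 6 and 97) bounds $|\Fix_{\P}(K_\alpha)|$ for a \emph{point} stabilizer $K_\alpha$ via $1+\frac{r}{k}d_1(d_1-1)\leq|\Fix_{\P}(K_\alpha)|\leq d$ and plays this off against $d_1\geq k-2x$ (which follows from $k=\sum_i i\,d_i$ and $x=\sum_i\binom{i}{2}d_i$); you never carry out this computation, and you explicitly concede at the end that verifying the quadratic threshold $k>2x+\frac32+\sqrt{4x-\frac74}$ is "the delicate step on which the rest of the proof rests" --- i.e.\ the sharp inequality that \emph{is} the content of (i) is left unproved. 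A smaller but real error in the same passage: from "$\lambda$ meets $C$ in a single point fixed by $L$" you write $L\leq G_\lambda$, which does not follow (fixing a point of a line does not fix the line).

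Part (ii) is also not a proof. The claim that $\sum_i\binom{i}{2}d_i=x\leq 8$ "leaves only finitely many candidate intersection types" is false as stated, since $d_1$ and $k$ are unbounded for fixed $x$; and the step where "restrictions on inner-pair and orbit counts constrain the commutator structure of $G_{(\C)}$ enough to extract a characteristic abelian subgroup of index at most $2$" is pure assertion --- this is precisely the hard case analysis that occupies the proof of \cite[Theorem 6.1]{BDLPZ} and cannot be waved through. Your part (iii) is essentially correct (minimality of $\C$ gives primitivity of $G^C$, and a regular abelian normal subgroup of a primitive group forces the affine case with $c$ a prime power, odd by (i)/(ii)), and your derivation of $d_1>0$ from $k>2x$ in (i) is fine, but since (iii) depends on (i) and (ii), the proposal as a whole has genuine gaps.
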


\begin{lemma} \label{lem:KSXY}{\rm \cite[Lemma 2.2]{PZ2007}}\quad   Assume that the {\sc Hypothesis} hold and that
$\C$ is $G$-normal and minimal.
\begin{enumerate}
\item[(a)] Then $\C$ is the set of $S$-orbits in $\mathcal{P}$ and
\begin{enumerate}
\item[(i)] Either  $Y\cap K=1$, or $S$ is elementary abelian and   $Y\cap
K=S$.
\item[(ii)] Either $X\cap K=1$, or $S$ is elementary abelian. and $X\cap K=K=S$.
\end{enumerate}
\item[(b)] Suppose in addition that $\C$ is maximal, that $Y\cap K=S\neq Y$,
and that one of the following conditions holds.
$$
\begin{array}{|c|c|c|}
\hline
\mbox{Condition} & \mbox{$\Soc(G^{\C})$}&\mbox{Extra Property}\\
\hline
1& \mbox{abelian} & \mbox{$\gcd(c,d)=1$}\\
2& \mbox{non-abelian} & \mbox{Schur multiplier of a minimal normal subgroup} \\
 &    & \mbox{of $G^{\C}$ has no section isomorphic to $ S $}\\
\hline
\end{array}
$$
Then $G$ has a  normal subgroup $M=T\times S$ where $T$ is a
minimal normal subgroup of $G$ and  $T^{\C}$ is
minimal normal in $G^{\C}$. Moreover either
\begin{enumerate}
\item[(i)] $T$ is non-abelian and transitive on $\mathcal{P}$,  or
\item[(ii)] the set $\C'$ of $T$-orbits in $\mathcal{P}$ is a $G$-normal
partition of $\mathcal{P}$ with $|\C'|=c$ such that for $C\in \C$ and
$C'\in \C'$, $|C'|=d$ and $|C\cap C'|=1$. Moreover either $M$ is
regular on $\mathcal{P}$ or $T$ is not semiregular on $\mathcal{P}$.
\end{enumerate}

\end{enumerate}
\end{lemma}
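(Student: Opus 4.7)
The plan is to establish (a) first by tracking how characteristic subgroups of $K$ interact with the partition $\C$, and then to derive (b) by analysing the normal subgroup $Y$ of $G$ modulo $S$ and invoking a splitting argument.

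For (a), since $\C$ is $G$-normal with $c>1$ the kernel $K$ is non-trivial, so its socle $S=\Soc(K)$ is non-trivial; being characteristic in $K\triangleleft G$ we get $S\triangleleft G$. The $S$-orbits on $\P$ therefore form a $G$-invariant partition of $\P$; since $S\le K$ they refine $\C$, and because $G$ acts faithfully on $\P$ they are non-singleton, so by minimality of $\C$ they coincide with $\C$. For the dichotomy in (i), $Y\cap K = C_K(S)$ is characteristic in $K$, and if non-trivial must contain a minimal normal subgroup of $K$, which lies in $S$ and commutes with $S$, hence lies in $Z(S)$; this forces $S$ to have a non-trivial abelian direct factor. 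Writing $S$ as the product of its minimal normal factors and using that a non-abelian minimal normal factor has trivial centre, one shows that the presence of any non-abelian factor contradicts the requirement that $C_K(S)$ centralize \emph{all} of $S$; so $S$ is elementary abelian. Then $S$ acts regularly on each $C\in\C$, the restriction $C_K(S)|_C$ embeds into $C_{\Sym(C)}(S)$, which for a regular action equals $S$ itself, giving $|C_K(S)|=c=|S|$ and hence $C_K(S)=S$. Part (ii) is strictly analogous with $X\cap K = Z(K)$ in place of $C_K(S)$: a non-trivial centre of $K$ forces $K$ itself to act abelian-transitively on $C$, hence regularly, so $K=S$ of order $c$.

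For (b), the standing hypothesis $Y\cap K=S\neq Y$ combined with (i) gives $S$ elementary abelian and $S\le Z(Y)$, so $Y/S\cong YK/K$ is a non-trivial normal subgroup of $G^{\C}$. The two alternatives on $\Soc(G^{\C})$ are designed to force the central extension $1\to S\to Y\to Y/S\to 1$ to split: under Condition~1 the coprimality $\gcd(|S|,|Y/S|)=1$ (deduced from $|S|\mid c$, $|Y/S|\mid d$ and $\gcd(c,d)=1$) lets Schur--Zassenhaus produce a complement; under Condition~2 the hypothesis that the Schur multiplier of a minimal normal subgroup of $G^{\C}$ has no section isomorphic to $S$ kills the $H^2$ obstruction to splitting. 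In either case one obtains a subgroup $T$ with $Y=T\times S$; one then checks that $T$ is in fact $G$-normal and minimal normal in $G$ (using that the complement is essentially canonical once we isolate the minimal normal bit of $G^{\C}$), so $M=T\times S\triangleleft G$ with $T^{\C}$ minimal normal in $G^{\C}$. The final dichotomy is a case analysis on the action of $T$ on $\P$: if $T$ is transitive then minimality of $T$ combined with $T\cap S=1$ forces $T$ non-abelian and we are in (i); otherwise the $T$-orbit partition $\C'$ is $G$-normal, $T\cap K\le T\cap Y\cap K=T\cap S=1$ forces each $T$-orbit to meet each $\C$-class in at most one point, and orbit-counting with $|M|=|T||S|$ then pins down $|\C'|=c$, $|C'|=d$ and $|C\cap C'|=1$; the regular versus non-semiregular subdivision is read off from the point-stabilizer $M_\alpha$.

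The main obstacle is the splitting step in (b): translating the conditions on $\Soc(G^{\C})$ into a genuine vanishing of the cohomological obstruction for $1\to S\to Y\to Y/S\to 1$. The abelian case is clean once coprimality is secured, but the non-abelian case relies on known input about Schur multipliers of non-abelian simple groups, together with a careful identification of which central extension of a minimal normal subgroup of $G^{\C}$ by $S$ actually appears inside $Y$. Part (a), by contrast, is a relatively routine piece of bookkeeping with characteristic subgroups, but the socle decomposition of $K$ must be handled carefully to rule out mixed abelian/non-abelian socles when $C_K(S)\ne 1$.
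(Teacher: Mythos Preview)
The paper does not supply its own proof of this lemma: it is quoted verbatim as \cite[Lemma~2.2]{PZ2007} in Section~3 (Preliminary Results), alongside several other cited lemmas, and no argument is given. So there is no in-paper proof to compare your proposal against; the authors simply invoke the result as a black box in Sections~5 and~6.

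That said, your outline follows the natural strategy one would expect for this kind of result and is broadly sound. A couple of places deserve more care if you intend to flesh it out. In part~(a)(i), the step ``the presence of any non-abelian factor contradicts the requirement that $C_K(S)$ centralize all of $S$'' is not quite right as stated: a priori $S$ could be a mixed product $A\times B$ with $A$ abelian and $B$ non-abelian, and then $Z(S)=A\neq 1$ without immediate contradiction. The cleanest route is to note that $C_K(S)\triangleleft G$ acts on each class $C$, and since $S$ is transitive on $C$ the centralizer of $S^{C}$ in $\Sym(C)$ is semiregular; combining this with minimality of $\C$ forces $C_K(S)$ to be transitive on each $C$, hence $S\le C_K(S)$, hence $S$ abelian. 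In part~(b), your coprimality argument for Condition~1 should be applied to the preimage $M$ in $Y$ of a minimal normal subgroup of $G^{\C}$ (so that $|M/S|=d$), not to $Y$ itself; you acknowledge this later but the write-up front-loads the wrong group. Finally, in (b)(ii) the claim $|C\cap C'|\le 1$ does not follow from $T\cap K=1$ alone (that only says $T$ acts faithfully on $\C$); you need to use that $S$ is regular on $C$ and that $S$ permutes the $T$-orbits, so that the $S$-action on $\C'$ has trivial kernel and hence $|\C'|\ge c$, whence equality and the grid structure.
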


\section{Computing parameters}
Assuming the {\sc Hypothesis} holds with $k^{(r)}=9$ or $10$,
 we get totally 304
potential parameter sets (called {\sc Lines})
$$(d, c, x, y, \gamma, \delta, k^{(v)},
k^{(r)}, b^{(v)}, b^{(r)}, {\rm intersection\,\,type}, t_{\max}),$$ by applying Algorithms 1 and 2 in \cite{BDLPZ} which implemented in GAP(\cite{GAP}).
For each {\sc Line}, there is no other partition of $\mathcal{P}$ which is
neither a strict refinement nor strict coarser than $ \C,$ so the
automorphism group is 2-step imprimitive relative to $ \C$.

In Section 5, we deal with 33 {\sc Lines} with
$k^{(r)}=9$  listed in Table 1. In Section 6 the remaining 271  {\sc Lines}
which listed in Table 2.

In Table 1 and Table 2, we renumber these {\sc Lines},
called {\sc Cases}, according to the different parameters $$(d, c, x, y,
\gamma, \delta, k^{(v)}, k^{(r)}, b^{(v)}, b^{(r)})$$ other than the
intersection type, so there are 11 {\sc Cases} in Table 1 and 106
{\sc Cases} in Table 2 needed to analysis.

It is worth mentioning that among the computation of the intersection types, we modified the algorithm by adding the restrictive condition $\frac{\al_i}{gcd(\al, \be)}|d_i'$  of \cite[Prop 4.5(vi)]{BDLPZ}.

\section{The case where $k^{(r)}=10$}

\begin{proposition}
  Let $\S$ be a non-trivial finite linear space with $v$ points
and Fang-Li parameter $k^{(r)}=10$.  Assume that $\S$ has a
subgroup $G$ of automorphism group which is line-transitive and
point-imprimitive. Then $\S$ is the Desarguesian projective plane
$\PG(2,9)$.
\end{proposition}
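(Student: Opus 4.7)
The plan is to work through the 106 parameter cases of Table~2 one by one, ruling out all but the case that corresponds to $\PG(2,9)$. The starting point is the enumeration already produced in Section~4, which, assuming the {\sc Hypothesis} with $k^{(r)}=10$, provides for each case a tuple $(d,c,x,y,\gamma,\delta,k^{(v)},k^{(r)},b^{(v)},b^{(r)})$ together with the feasible intersection types and the value of $t_{\max}$. By the 2-step imprimitivity built into the enumeration, the top group $G^{\C}$ and the bottom group $G^C$ are primitive of known degrees $d$ and $c$, both of which are well below $4096$, so the classification of primitive groups of small degree gives a short list of candidates for each.

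I would begin by splitting the analysis along the Camina--Praeger dichotomy quoted after Lemma~3.6: either $\C$ is $G$-normal, or $G$ is point-quasiprimitive and almost simple. In the $G$-normal branch, the structural Lemmas~3.7 and~3.8 apply; in particular, whenever the associated value of $x$ satisfies the bounds of Lemma~3.7, $G_{(\C)}$ is semiregular with $|G_{(\C)}|=c$ odd and the bottom group is affine, which immediately kills most cases where $c$ is even or has an incompatible prime factorization. For the remaining $G$-normal cases I would use Lemma~3.8 to locate the socle $S$, the centralizers $X,Y$, and, when the Schur-multiplier condition of part~(b) applies, a normal subgroup $M=T\times S$; the resulting $G$-invariant partition $\C'$ with $|C\cap C'|=1$ is often incompatible with the allowed intersection type, again giving a contradiction. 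In the quasiprimitive almost-simple branch, the candidates for $G^{\P}$ are listed via the primitive-group classification, and the arithmetic constraints $v=cd$, $b=b^{(v)}b^{(r)}$, $r=k^{(r)}b^{(r)}$ typically leave only finitely many simple socles to test against the divisibility requirements imposed by Lemmas~3.1 and~3.3.

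For the case-by-case eliminations the main tools are then the following: Lemma~3.2 compares the transitivity of $G^{\C}$ to $t_{\max}$, directly ruling out any case where the forced $t$-transitivity of the candidate top group exceeds the permitted $t_{\max}$; Lemma~3.3 rules out top/bottom group candidates whose subdegrees are not all divisible by $b^{(r)}$; Lemma~3.4 forces or forbids 2-transitivity of $G^{\C}$ or $G^C$ in the boundary situations $\gamma=2$ or $\delta=2$; and Lemma~3.1, together with the classical Sylow argument of Camina--Mischke, eliminates cases where no prime $p$ with $p>k$ can simultaneously divide $v$ or $v-1$ and be compatible with the structure of $G$. When a Sylow subgroup of a line-stabiliser has an intermediate fixed-point set, Lemmas~3.5 and~3.6 reduce the problem to a smaller line-transitive linear space on $\Fix_{\P}(P)$ with its own induced partition $\C|_F$, and the resulting parameters are then checked against the same table.

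A single case will survive all of these tests, namely the one with $(v,b,k,r)=(91,91,10,10)$ and a partition into classes corresponding to the standard imprimitive action, and for this case I would invoke the known classification of linear spaces with $v=91$, $k=10$ to identify $\S$ as $\PG(2,9)$. The main obstacle I foresee is the $G$-normal branch for cases in which $x$ is too large for the semiregularity conclusion of Lemma~3.7 to apply and in which $\Soc(G^{\C})$ is non-abelian; in these situations the Schur-multiplier hypothesis of Lemma~3.8(b) must be checked by hand for each simple candidate, and the resulting $T\times S$ structure has to be analysed delicately to rule out the partition $\C'$ of Lemma~3.8(b)(ii), which is where most of the work of Section~6 will concentrate.
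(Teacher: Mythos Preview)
Your overall strategy matches the paper's, but you have misidentified which table governs this proposition. Proposition~5.1 concerns $k^{(r)}=10$, and the relevant parameter list is Table~1 with only \emph{eleven} cases (thirty-three lines), not Table~2 with 106 cases; Table~2 and Section~6 belong to the $k^{(r)}=9$ proposition. Your closing identification of the surviving case $(v,b,k,r)=(91,91,10,10)$ is consistent with Table~1, so the confusion is in the framing rather than the endpoint, but the discrepancy should be fixed before anything else.

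Once the correct table is in hand, most of the eleven cases fall exactly as you outline: Cases~1 and~4 reduce to the known classification for $k\le 12$ (and give $\PG(2,9)$), Cases~3 and~5 are excluded by Cresp's search, and Cases~2, 7, 8, 10, 11 are killed by the Camina--Mischke prime argument (your Lemma~3.1) using a prime dividing the order of the forced top or bottom group but not $v(v-1)$. The substantive work is Cases~6 and~9, where $(d,c)=(49,169)$ and $(169,49)$. Here your sketch is too optimistic. The paper first shows $\cC$ is $G$-normal and $K$ semiregular (using the $d_1$-inequality from Praeger--Tuan in Case~6 and the numerical bound of Lemma~3.7 in Case~9), then applies Lemma~3.8 to produce $M=T\times S$. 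But the decisive step, which you do not mention, is a concrete fixed-point count: when $\Soc(G^{\cC})$ is $A_7^2$ or $A_{13}^2$ in product action, one takes a suitable Sylow $p$-subgroup $P\le G_\lambda$ (with $p=5$ or $11$), observes that $P$ fixes exactly four classes of~$\cC$, and then checks that the intersection type forces $P$ to fix at least twelve classes modulo~$p$, a contradiction. The remaining non-abelian socle $\PSL(3,3)^2$ is handled via the Schur-multiplier clause of Lemma~3.8(b), and the abelian socles are eliminated because $M$ would then be semiregular on lines while $v\nmid b$. Your proposal gestures at Lemma~3.8 but does not supply this fixed-point argument, which is where the actual obstruction lives; without it Cases~6 and~9 remain open.
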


\begin{proof} We deal with 11 {\sc Cases} in Table 1 case by case.

 {\sc Cases} 1 and 3-5: In each of these {\sc Cases}, $k=10$. Recall that \cite{PZ2007} classified the linear spaces with line size at most 12, so by \cite[Theorem 1.1]{PZ2007}, if {\sc Case} 1 or 4 holds,  then $\S$ is the Desarguesian projective plane $\PG(2,9)$, and
{\sc Cases} 3, 5 can not occur by \cite{GC2001}.

{\sc Case} 2: Assume {\sc Case 2} holds, then  $\Soc(G^C)$ is $\PSL(2,  2^7)$,  or $A_{129}$. It follows that there exists a prime
$p=127\mid |G^C|$, and so $127 \mid|G|$, we also have $p>k=30$, $k^2-k+1>r$,
however $p\nmid v(v-1)=2^3\cdot3^3\cdot 5\cdot 29\cdot 43$, contradicts  Lemma \ref{p-k}. Similarly,
{\sc Case} 8 can not occur, because here $\Soc(G^{\C})$ is $\PSL(2,  2^7)$,  or $A_{129}$, and there exists a prime $p=127$ satisfying the conditions of Lemma \ref{p-k}(ii) but $p\nmid v(v-1)$.

{\sc Cases}  7, 10 and 11:  Since $k^2-k+1>r$, and there exists a
prime $p>k$ which  is listed in Table 1, such that $p\mid |G|$, but
$p\nmid v(v-1)$, a contradiction. In Table 1, the symbol $(T, p)$ or $(B,p)$ denotes the
contradiction by the Top or Bottom group and the prime $p$ such that $p$
divides the order of the top or bottom group but $p$ does not divides
$v(v-1)$.

\medskip
{\sc Cases} 6 and 9:  \textbf{(a)}  For each {\sc Case}, the partition $\C$ is $G$-normal, $K$ is  semiregular.
\medskip

For {\sc Case} 6, suppose first that $G$ is point-quasiprimitive and
almost simple, then $G^{\C} \cong G$ is an almost simple primitive
group of degree 49. As $G$ is point-transitive additionally, we have that $v\mid |G|$. Hence $G\geq A_{49}$. But here $t_{max} =1$, contradicting Lemma \ref{t}. So $\C$ is $G$-normal
relative to $ K$. If $K$ is not semiregular, as $d_1\cdot d_2=14\times 7$ in {\sc Line} 20 and $28\times 7>0$ in {\sc Line} 21, by
\cite [Corollary 4.2]{PraegerTuan} and \cite [Lemma
5.1]{PraegerTuan}, we have
$$1+\frac{r}{k}d_1(d_1-1)\leq |\Fix _{\P}(K_{\al})|\leq d, $$
it follows that $d_1\leq 1$. This is a contradiction because $d_1 = 14$ or $28$ in this {\sc Case}.  Thus $K$ is semiregular, and $K\unlhd G^C$,
 then $K$ is the unique soluble minimal regular normal
subgroup of primitive group $G^C$, so $K=Z_{13}^2$, and $Y\cap
K=K=S, G/Y\leq \Aut(S)=\GL(2, 13)$.

 For {\sc Case} 9, suppose that $G$
is quasiprimitive and almost simple, then $G\cong G^{\C}$ is an
almost simple primitive group of degree 169. Since in {\sc Case} 9,
$t_{max}=2$,   $G$ is at most two transitive. However, checked by GAP(\cite{GAP}),
we know that there is no such group satisfying the conditions. Thus the partition $\C$
 is $G$-normal relative to $K$. Moreover, in {\sc Case} 9, $k=70$ and $x=14$, so we have that
  $$k>2x+\frac{3}{2}+\sqrt{4x-\frac{7}{4}}=\frac{59+\sqrt{217}}{2},$$
 hence by Lemma \ref{Kc}, $K$ is semiregular, and $G$ is affine. Therefore $K=Z_{7}^2,
K=S=Y\cap K=\Soc(G^C), G/Y\leq \Aut(S)=GL(2,7). $

\medskip
\textbf{(b)}  {\sc Case} 9 can not occur.
\medskip

 Suppose
$Y^{\C}=1$, then $Y=Y\cap K=S, Y\leq K$, hence $|\G1|=|G/K|\mid
|G/Y|$. It follows that $d=169\mid (7^2-1)(7^2-7)$, a contradiction.
So, by \cite[Lemma 2.1]{PZ2007}, there exists a normal subgroup $M$
of $G$, such that $S<M\leq Y, M/S\cong M^{\C}$ is a minimal subgroup of $G^{\C}$, and $M=T\times S$.

(1)\, $\Soc(G^{\C})=Z_{13}^2$, then $M^{\C}=Z_{13}^2$. In this case $M$ is regular on
point $\mathcal{P}$, with a unique Sylow 13-subgroup
$T=Z_{13}^2$, and $T$ is regular on $\C$. Since the intersection type is $(1^{42},2^{14})$,  all entries in the
$\C$-intersection type are at most 2, no element of $S$ of order
7 fixes a line, for if $g\in S$, $o(g)=7$ and $g\in G_{\lam}$ for
some $\lam \in \L$, then $g$
 fixes $C\cap \lam $ for any $C \in \C$, particularly, $g$ fixes $C\cap
\lam $ for which $|C\cap \lam|=1$, thus $g$ has fixed points,
contradicting  the fact that $S$ is semiregular.  Similarly there is
no element of order 13 in $T$ fixing a line. Hence $M$ is semirugular
on $\S$. So $v\mid b$. However, in {\sc Case} 9, $\frac{b}{v}=\frac{b^{(r)}}{k^{(v)}}=\frac{12}{7}$, a contradiction.

(2)\, $\Soc(G^{\C})=A_{13}^2$ or $\PSL(3, 3)^2$. Here
$\Soc(G^{\C})\neq A_{169}$ since $t_{max}=2$. In this case
$M^{\C}$ is nonabelian.

(2.1)\,  $\Soc(G^{\C})=A_{13}^2$. Then $\C={\C}_1\times
{\C}_2$, $\C=\{C_{ij}\,|\,1\leq i\leq 13, 1\leq j\leq 13\}$,
$A_{13}^2\leq G^{\C} \leq S_{13}\wr S_2$ in product action.
$\Soc(G^{C})$ acts on $\C$ as follow. Let $(h_1, h_2)\in
A_{13}^2\leq G^{\C}$, and $C_{ij} \in \C$. Then $C_{ij}^{(h_1, h_2)}
=C_{i^{h_1}j^{h_2}}$. Let $P$ be a Sylow 11-subgroup of $G$. Since
$11\nmid b$, $P \leq G_{\lam}$ for some $\lam \in \L$. Moreover,
because $11 \nmid |GL(2, 7)|$, then if $P$ fixes some $C_{ij}$
setwise, it must fix $C_{ij}$ piontwise. Hence $\Fix
_{\mathcal{P}}(P)$ is a union of $\C$-classes. Without loss generality, we assume that
$P=\langle(1,2,3,\cdots, 11)\rangle \times \langle(1,2,3,\cdots,
11)\rangle$, so $\Fix _{\C}(P)=\{C_{ij}|i, j=12\,\, {\mbox{or}}\,\,
13\}$, thus $|\Fix _{\C}(P)|=4$. Furthermore, since the intersection type is
$(1^{42}, 2^{14})$ and $P \leq G_{\lam}$, $P$ fixes setwise the 42
classes $C$ such that $|C\cap \lam |=1$ and the 14 classes $C$
setwise such that $|C\cap \lam |=2$.  Since $42\equiv 9\pmod{11}$ and
$14\equiv 3\pmod{11}$, thus $P$ must fix at least $9+3=12$ classes
setwise, a contradiction.

(2.2)\, $\Soc(G^{\C})=PSL(3, 3)^2$ and $M^{\C}\cong T^{\C}=T$.
For the Schur Multiplier of $\PSL(3, 3)$ is trivial, by Lemma
\ref{lem:KSXY}, either $T$ is transitive on points or the set of
$T$-orbits forms a $G$-normal partition $\C'$ satisfying {\sc Case}
6. In the latter case, $T$ must be abelian, a contradiction.
Thus $T$ is transitive on $\mathcal{P}$, then
 $c\mid |T|$, anther contradiction. Therefore {\sc Case} 9 can not occur. In Table 1 the symbol KSXY refers to the situation that the {\sc Case} is ruled out by Lemma \ref{lem:KSXY}.

\medskip
\textbf{(c)} {\sc Case} 6 can not occur.
\medskip

 Suppose $Y^{\C}=1$, then
$Y=Y\cap K=S, Y\leq K$, hence $|G^{\C}|=|G/K|\mid |G/Y|$ and so
$|\G1|\mid |GL(2, 7)|$. Therefore $d=49\mid (13^2-1)(13^2-13)$, a
contradiction. By Lemma \ref{lem:KSXY}, there exists a normal
subgroup $M$ of $G$, such that $S<M\leq Y, M/S\cong M^{\C}$,
$M=T\times S$, and either $T$ is transitive on $\mathcal{P}$ or $T$
induces a $G$-normal partition of $\mathcal{P}$ with 169 parts of
size 49.  The latter case can not occur because the
corresponding {\sc Case}, i.e., {\sc Case 9}, has been ruled out in (b). Thus $T$ is transitive on $\mathcal{P}$,
then by Lemma \ref{lem:KSXY} $\Soc(G^{\C})=A_{7}^2$ or
$PSL(2, 7)^2$, and $c\mid |T|$. So $\Soc(G^{\C})=A_{7}^2 $. As a result,
$\C={\C}_1\times {\C}_2$, $\C=\{C_{ij}\,|\,1\leq i\leq 7, 1\leq
j\leq 7\}, A_{7}^2\leq \G1 \leq S_{7}\wr S_2$ in product action.
$\Soc(G^{\C})$ acts on $\C$ as follow. Let $(h_1, h_2)\in A_7^2\leq
G^{\C}$, $C_{ij} \in \C$.  Then $C_{ij}^{(h_1, h_2)}
=C_{i^{h_1}j^{h_2}}$. Let $P$ be a Sylow 5-subgroup of $G$. Since
$5\nmid b$, $P \leq G_{\lam}$ for some $\lam \in \L$.  Moreover,
because $5 \nmid |GL(2, 13)|$, then if $P$ fixes some $C_{ij}$
setwise, it must fix $C_{ij}$ piontwise. Hence $\Fix
_{\mathcal{P}}(P)$ is a union of $\C$-classes. We assume that
$P=\langle (12345)\rangle \times \langle(12345)\rangle$, so $\Fix
_{\C}(P)=\{C_{ij}|i, j=6\,\, {\mbox{or}}\,\,7\}$. Thus $|\Fix
_{\C}(P)|=4$. Since the intersection type is
$(1^{14}, 2^{2}, 3^{14})$ in {\sc Line} 20 and $(1^{28}, 2^{7}, 4^{7})$ in {\sc Line} 21,
and $P \leq G_{\lam}$, similar to the proof of {\sc Case} 9, we can
rule out these two possibilities.

This completes the proof of Proposition 5.1.
\end{proof} $\hfill\square$

\section{The case where $k^{(r)}=9$}
\begin{proposition}
  There is no non-trivial line-transitive point-imprimitive  finite linear space $\S$ with the Fang-Li parameter $k^{(r)}$ is $9$.
\end{proposition}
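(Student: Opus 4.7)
The plan is to follow the case-by-case template of Proposition~5.1, now applied to every {\sc Case} arising from $k^{(r)}=9$. Since the conclusion is non-existence, every potential parameter line must be eliminated. To start, any {\sc Case} with $k\leq 12$ is dispatched immediately by \cite[Theorem 1.1]{PZ2007}, whose classification of line-transitive linear spaces of small line size contains no point-imprimitive example with $k^{(r)}=9$. The bulk of the remaining {\sc Cases} should be eliminated by Lemma~\ref{p-k}(ii): for a prime $p>k$ tabulated as a witness $(T,p)$ or $(B,p)$, that is, dividing the order of the top or bottom group but not $v(v-1)$, a contradiction follows as soon as $k^2-k+1>r$ is verified.

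For the {\sc Cases} that survive these first two sieves, the partition $\C$ must be $G$-normal. I would first exclude the point-quasiprimitive almost simple alternative: the degree $d$ combined with the bound $t_{\max}\leq 2$ from Lemma~\ref{t} typically leaves no almost simple primitive group of degree $d$ capable of acting on a linear space with the tabulated intersection type, a finite check carried out in \cite{GAP}. Once $\C$ is $G$-normal, Lemma~\ref{Kc}, applicable whenever $x\leq 8$ or $k>2x+\frac{3}{2}+\sqrt{4x-\frac{7}{4}}$, together with the fixed-point inequality of \cite[Corollary 4.2]{PraegerTuan}, forces $K=G_{(\C)}$ to be semiregular on $\P$; combined with the primitivity of $G^C$ this pins $K=S$ down to an elementary abelian $p$-group and places $G/Y$ inside $\GL(n,p)$.

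With $K$, $S$, $X$, $Y$ identified, the endgame is to invoke Lemma~\ref{lem:KSXY}, which produces a normal subgroup $M=T\times S$ with $T^{\C}$ minimal normal in $G^{\C}$. When $\Soc(G^{\C})$ is abelian, $M$ is regular on $\P$; inspecting the tabulated intersection type exactly as in Case~9 of Proposition~5.1 shows that no element of prime order in $S$ or $T$ can fix a line, so $M$ is semiregular on $\L$ as well and hence $v\mid b$, contradicting the tabulated ratio $b/v=b^{(r)}/k^{(v)}$. When $\Soc(G^{\C})$ is non-abelian, $\C$ acquires a product structure $\C_1\times\C_2$, and a Sylow $p$-subgroup $P$, for a suitable prime $p\mid |G|$ with $p\nmid b$, must fix some line $\lam$ and hence permute the $d_i$ classes of each intersection multiplicity $i$; a congruence count modulo $p$ on $\Fix_{\C}(P)$, in the style of Cases~6 and~9 of Proposition~5.1, yields the contradiction. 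The main obstacle I anticipate is essentially one of volume: each of the many {\sc Cases} must be matched with the right tool, and a handful with especially rich intersection type will demand bespoke congruence arguments, but no input beyond the toolkit assembled in Sections~2 and~3 should be needed.
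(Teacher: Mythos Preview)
Your template captures the coarse sieve correctly: the $k\leq 12$ reduction, the $(T,p)/(B,p)$ witnesses via Lemma~\ref{p-k}, and the KSXY endgame all appear in the paper's proof. But the paper needs several further tools that your plan omits, and at least one of your structural assumptions fails.

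First, a significant block of {\sc Cases} (Step~3 in the paper: {\sc Cases} 10, 14, 17, 23, 29, 33, 34, 44, 46, 49, 50, 63, 64) is handled not by the $p\nmid v(v-1)$ trick but by the \emph{induced sublinear space} argument of Lemma~\ref{ngh}: one picks a prime $p\mid|G|$ with $p\nmid b$, takes $P\in\mathrm{Syl}_p(G_\lam)$ and $F=\Fix_\P(P)$, and derives the inequality $k_1(k_1-1)\leq k+r-p-1$ where $k_1\equiv k\pmod p$, $0\leq k_1<p$. This numerical contradiction is absent from your scheme. Two further {\sc Cases} (22 and 30) require yet more: Wielandt's bound on fixed points in terms of the number of non-trivial cycles, the Liebeck--Saxl classification of primitive groups with an element of large prime order, and the sharp 3-transitivity of $\PGL(2,1069)$.

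Second, your assumption that $K$ is always forced to be semiregular is wrong. In {\sc Cases} 85 and 101 the hypotheses of Lemma~\ref{Kc} fail, and if $K$ were semiregular then $c$ would have to be a prime power, which it is not; so $K$ is \emph{not} semiregular. The paper instead invokes \cite[Prop.~4.3]{PraegerTuan} to bound $d_1$, isolates the unique surviving intersection type with $d_1=0$, and kills it with \cite[Prop.~5.2(c)]{PraegerTuan}. None of this fits the ``$M$ regular, $v\mid b$'' or ``product-structure congruence'' endgames you describe. Finally, in the KSXY step the paper's contradiction when $T$ is transitive is simply $v\mid|T|$, and the alternative ($T$-orbits give a new partition) is ruled out by observing that the dual parameter set $(d,c)\mapsto(c,d)$ does not appear in Table~2; your sketch does not mention this swap argument.
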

\begin{proof} Suppose for the contrary that there are line-transitive point-imprimitive symmetric designs with $k^{(r)}$ is $9$. Then we need to deal with
 106 {\sc Cases} listed in Table 2.  Here the unstated notations in the last collum of Table 2 please refer to Table 1.
We  prove Proposition 6.1 in 7 steps.

\medskip
\textbf{Step 1.} Rule out 66 {\sc Cases}.
\medskip

There are 6 {\sc Cases} satisfing $k\leq12$ and ruled out by
\cite[Theorem 1.1]{PZ2007},  54 {\sc Cases} ruled out by Lemma
\ref{p-k}. For each of these 54 {\sc Cases}, we have $k^2-k+1>r$ and there exists a prime
$p>k$ such that
$p\mid |G|$, but $p\nmid v(v-1)$ which contracts  Lemma \ref{p-k}. The prime $p$ will be listed in the last column of Table 2.

For {\sc Cases} 86, 87, 89, 92, 104 and  105, we have $G^{\C}\geq A_d$,
but $t_{max}=1$ or 2, a contradiction. The symbol $(T,t_{max})$
denotes this contradiction.

\medskip
\textbf{Step 2.} {\sc Cases} 5, 38, 40, 59, 73, 91, 96 and 106 can
not occur.
\medskip

In {\sc Cases} 40, 91 and 96, we have $\de=2$. By Lemma \ref{1231},
we know that $G^{\C}$ is 2-transitive. In {\sc Cases} 5, 38, 59, 73
and 106, $\de =1$, then by Lemma \ref{123}, $G^{\C}$ is
2-transitive. Therefore, by the classification theorem of
2-transitive permutation groups (\cite[Theorem 5.3]{P1989}), in each
{\sc Case} we have that $\G1\geq A_d$. But $t_{max}=2$,
contradicting Lemma \ref{t}. Therefore these {\sc Cases} are ruled
out. The symbol $(\de, t_{max})$ in Table 2 refers to this
situation.

\medskip
\textbf{Step 3.} {\sc Cases} 10, 14, 17, 23, 29, 33, 34, 44, 46, 49,
50, 63 and  64 can not occur.
\medskip

In {\sc Cases} 10, 14 and 17,  we have $\ga=1$, by Lemma \ref{123},
$G^C$ is 2-transitive.  In {\sc Case} 46, $\ga=2$, and $d$ is not a
prime power, by Lemma \ref{1231}, $G^C$ is 2-transitive. In {\sc
Case} 23, $\ga=1$, by Lemma \ref{123}, $G^C$ is also 2-transitive.
So by \cite[Theorem 5.3]{P1989}, for every {\sc Case} the bottom
group $G^C\geq A_c$.

For  {\sc Cases} 10, 14 and 19, we have $k^2-k+1>r$, and a prime $p>k$ which is listed in Table 2,  with $p\mid |G^C|$, but $p\nmid v(v-1)$, contradicting Lemma \ref{p-k}.

For any other {\sc Case}, there is a prime $p$ which is listed in Table 2, such that $p$ divides
  $|G|$,  but does not divides $b$.  Thus, in each {\sc Case},  a Sylow $p$-subgroup $P$ of $G$ will
fix some line $\lam \in \L$. Let $F = \Fix_{\P}(P)$. For the line
size $k$, let $k_1$ to be the integer such that $0 \leq k_1 < k$ and
$k\equiv k_1\pmod p$. In each {\sc Case} $k_1>2$ and $(v-k)\nmid p$
guarantee that there is at least one point which is not on $\lam$ fixed by
$P$.
 Then by Lemma \ref{ngh}, $F$ induces an linear space $\S|_F=(F, \L|_F)$. For this subdesign,
 the number of the points $v_0=|F|\leq k+r-p-1$, the line size $k_0=|\lam \cap F|\geq k_1$, since $v_0-1\geq k_0(k_0-1)\geq k_1(k_1-1)$,
 then $$k_1(k_1-1)\leq k+r-p-1.$$
 But this inequality does not hold in every {\sc Case}, the desired contradiction.

\medskip
\textbf{Step 4.} {\sc Cases}  22 and 30 can not occur.
\medskip

{\sc Case} 22:  Since $G^C\geq A_{1370}$ or $\PSL(2, 37^2)$. Let
$p=1367$, $P\in Syl_{p}(G)$. If $G^C\geq A_{1370}$, then as above
paragraph, we can get the desired contradiction. So $G^C \geq
\PSL(2, 37^2)$. Since $t_{max}=1$, we have $G^{\C}\ngeq A_{17798}$.
For $d=17798=22\times 809$, let $p=809$, then $G$ has an element $g$
of order 809. Suppose that $g$ has at least one fixed point in $\C$,
then it has at least 809 fixed points and $q$ cycles of length $p$
where $q\leq21$. By \cite[Theorm 13.10]{Hw}, $|\Fix_{\C}(g)|\leq
4q-4$, a contradiction. So $g$ has no fixed point on $\C$. Thus, by
\cite[Therom 1.1, Table 3]{mlj}, there is no such primitive group, a
contradiction.

{\sc Case} 30:  Since $G^C\geq A_{1070}$ or $\PSL(2, 1069)$. Let
$p=1063$, $P\in Syl_p(G)$. If $G^C\geq A_{1070}$, similar to Step 3, we can get the desired contradiction. Thus $G^C \geq
\PSL(2, 1069)$. Because the prime $p=1069\nmid d$, there is a part
$C\in\C$, such that $P\leq G_C$, so $P^C<G^C\leq \PGL(2, 1069)$.
Since $\PGL(2, 1069)$ is sharply 3-transitive, the nontrivial
subgroup $P^C$ can fix at most 2 points of $C$. But according to
Lemma \ref{p3}, $|\Fix_C(P)|\geq 3$, a contradiction.

\medskip
\textbf{Step 5.} {\sc Cases} 1, 2,  4, 6, 7, 11, 27, 32, 47 and  65
can not occur.
\medskip

For each {\sc Case}, we have that
$k>2x+\frac{3}{2}+\sqrt{4x-\frac{7}{4}}$. If the partition is $G$-normal, by Lemma \ref{Kc}, then $c$ is
a prime power, a contradiction. So $G^{\C}\cong G$ is quasiprimitive
and almost simple.

For {\sc Cases} 1, 2,  4, 7, 11, 32, 47 and 65, since $\lcm(b, v)\mid
|G|$, then $G\geq A_d$, but in each {\sc Case},  $t_{max}=1$ or 2,
contradicting   Lemma \ref{t}. For {\sc Case} 6, we have $\de=1$, by
Lemma \ref{123}, $G^{\C}$ is 2-transitive. Then by \cite{P1989}, we
can find that $G^{\C}\geq A_{8090}$ or $\PSL(2, 8089)$. Furthermore,
$b\mid |G|$, so $G^{\C}\geq A_{8090}$, as a result, the prime
$8087\mid|G|$, this contradicts  Lemma \ref{p-k}. For{\sc Case}
27, since $t_{max}=2$, then $\Soc(G^{\C})=\PSL(5, 5)$, it follows
that $G_C=G^{\C}_C=5^4.\GL(4, 5)$. However, from
$\Soc({G^C})=\PSL(4, 3), \PSU(4, 2)$ or $A_{40}$, we have
$3^3\mid|G_C|$, this is  a contradiction.

\medskip
\textbf{Step 6.} {\sc Cases}  35, 51, 56, 69 and 97 can not occur.
\medskip

(1)\, The partition of each {\sc Case} is $G$-normal. Otherwise,
$G^{\C}\cong G$ is an almost simple primitive group of degree $d$,
then $[v, b]\mid |G^{\C}|$, there is no such group in {\sc Case
35}, 56 or 97. Now $G\geq A_{103}$ in {\sc Case} 51, but here
$t_{max}$ is 2, a contradiction. So the partition is $G$-normal
relative to $K$.

(2)\, Since $x<8$ and $\C$ is $G$-normal in {\sc Case} 51, it is
ruled out by \cite[Theorem 1.6]{PraegerTuan}.
 For {\sc Case} 69, we have that
$k>2x+\frac{3}{2}+\sqrt{4x-\frac{7}{4}}$.
 By Lemma \ref{Kc},  $K=Z_{103}$ is
 semiregular, and $Y\cap K=K=S, G/Y\leq \Aut(S)=Z_{102}$. Since $52\mid|G|$, but $52\nmid 102$, thus $52\mid|Y|$, hence $Y^{\C}\neq 1$.
Therefore, by Lemma \ref{lem:KSXY},  there exists a subgroup $M$ of
$G$, such that $S<M\leq Y, M/S\cong M^{\C}$, and $M=T\times S$.  By
\cite[Table B.4]{DM1996}, $ \Soc(G^{\C})=\PSL(3, 3).2$ or $
A_{52}$. Also we have  that $(c, d)=1$ and by \cite[Theorem
5.1.4]{KL1990}, the Schur multiplier of $\PSL(3, 3)$, $A_{52}$ has
no section isomorphic to $S$, so either $T$ is transitive on
$\mathcal{P}$ or $T$ induces a new partition of $\mathcal{P}$ with
103 parts of size 52. The latter case can not occur because there is
no corresponding {\sc Case} with $(d,c)=(52,103)$ in Table 2. Thus
$T$ is transitive on $\mathcal{P}$, and $v \mid |T|$, a
contradiction.  {\sc Cases} 35 and 56 can be ruled out by the
similar method.

For {\sc Case} 97, if $K$ is not semiegular, by  \cite [Corollary
4.2]{PraegerTuan} and \cite [Lemma 5.1]{PraegerTuan}, we have that
$$1+\frac{r}{k}d_1(d_1-1)\leq |\Fix_{\P}(K_{\al})|\leq d, $$
a contradiction.Thus  $K$ is semiregular. Since $ K\unlhd G^C$,
then $K$ is the unique soluble minimal regular normal subgroup of
primitive group $G^C$, hence $K=Z_{257}$, and $Y\cap K=K=S, G/Y\leq
\Aut(S)=Z_{256}$. Since $13\mid|G^{\C}|, 13\nmid |G/Y|$, so
$Y^{\C}\neq1$. By Lemma \ref{lem:KSXY},  there exists a normal
subgroup $M$ of $G$, such that $S<M\leq Y, M/S\cong M^{\C}$,
$M=T\times S,$ by \cite[Tble B.4]{DM1996}, $\Soc(G^{\C})=A_{65}$, $\PSL(2, 2^6)$, $\PSL(2, 5^2)$,
$\PSU(3, 2^2)$ or $Sz(2^3)$, the condition of Lemma \ref{lem:KSXY} holds by \cite[Theorem 5.1.4]{KL1990}, therefore either $T$ is transitive on $\mathcal{P}$ or $T$
induces a $G$-normal partition of $\mathcal{P}$ with 257 parts of
size 40. The latter case can not occur because there is no
corresponding {\sc Case}. Thus $T$ is transitive on $\mathcal{P}$,
and $v\mid |T|$,
which is impossible.

\medskip
\textbf{Step 7.} {\sc Cases}  85 and 101 can not occur.
\medskip

{\sc Case} 101: Since $\de=1$, then $G^{\C}$ is 2-transitive, hence
$\Soc(G^{\C})=\PSL(4, 3)$ or $A_{40}$. If $G\cong G^{\C}$, then
$71||G^{\C}|$, a contradiction. So the partition is $G$-normal. Suppose
$K$ is semiregular, then $c$ must be a prime power, this
contradiction leads to that $K$ is not semiregular. By \cite[Prop.
 4.3]{PraegerTuan},
$$d_1\leq\frac{1}{2}+\sqrt{2k-\frac{7}{4}}.$$
So in this {\sc Case} the intersection type must be $(2^{10},
3^{10}, 4^{10})$, i.e., $d_1=0$. Now by \cite[Prop. 5.2(c)]{PraegerTuan}, {\sc
Case} 101 is ruled out.

{\sc Case} 85:  It can be ruled out by the similar method of {\sc
Case} 101.

This complete the proof of Proposition 6.1. $\hfill\square$

\end{proof}

\begin{corollary}
  Let $\S$ be a non-trivial finite linear space with the Fang-Li parameter $k^{(r)}$ is $9$.  If  $G^{\C}\leq Aut({\cal S})$
is line-transitive, then $G$ is  point-primitive.
\end{corollary}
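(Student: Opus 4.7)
The plan is to derive this corollary as an immediate contrapositive of Proposition 6.1. Since Proposition 6.1 asserts that no non-trivial finite line-transitive, point-imprimitive linear space exists with Fang-Li parameter $k^{(r)}=9$, the corollary follows by assuming the contrary and invoking that result.

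More concretely, I would argue as follows. Suppose for contradiction that $G\leq \Aut(\S)$ is line-transitive but not point-primitive. Then $G$ acts point-imprimitively on $\mathcal{P}$, so there is a non-trivial $G$-invariant partition $\C=\{C_1,\dots,C_d\}$ of $\mathcal{P}$ with $1<c=|C_i|<v$. This configuration, together with $k^{(r)}=9$, satisfies precisely the \textsc{Hypothesis} set up in Section~2 for the case $k^{(r)}=9$. Hence the triple $(\S,G,\C)$ provides a non-trivial finite linear space with $G$ line-transitive and point-imprimitive and with $k^{(r)}=9$, directly contradicting Proposition~6.1. Consequently the assumption fails and $G$ must act point-primitively on $\mathcal{P}$.

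There is essentially no obstacle here, since all the heavy case analysis has already been done to establish Proposition~6.1: the 106 parameter sets were systematically excluded in Steps~1--7 of its proof. The only small care required is to check that the \textsc{Hypothesis} is really triggered by the assumption of point-imprimitivity alone (together with line-transitivity and $k^{(r)}=9$), so that the conclusion of Proposition~6.1 genuinely applies; this is immediate from the definitions in Section~2.4, since any $G$-invariant partition with $1<c,d$ supplies the required $\C$. Thus the corollary is a one-line deduction from Proposition~6.1.
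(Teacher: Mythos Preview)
Your proposal is correct and matches the paper's approach: the corollary is stated without a separate proof in the paper, being an immediate consequence of Proposition~6.1 via the contrapositive you describe. The only implicit step you use (and the paper takes for granted) is that line-transitivity forces point-transitivity by Block's theorem, so that ``not point-primitive'' indeed means point-imprimitive; with this, your argument is exactly the intended one-line deduction.
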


\medskip

{\bf Proof of Theorem 1.1}\,\, This follows from Propositions 5.1
and 6.1. $\hfill\square$

\newpage
\section{Appendix}

\begin{table*}[!h]
\begin{center}

Table 1: Potential parameter sequences for $k^{(r)}=10$

\bigskip
\begin{footnotesize}


\end{tiny}
 \end{center}

\end{document}